\pdfoutput=1

\documentclass[12pt]{article} \textheight 8in\textwidth
6in \oddsidemargin 0in\evensidemargin 0in
\usepackage{amsmath,amsthm,amssymb}
\usepackage{graphicx}
\usepackage{amsfonts}
\usepackage{epsf}
\usepackage{tikz}

\makeatletter
\let\@fnsymbol\@arabic
\makeatother

\newtheorem{theorem}{Theorem}[section]
\newtheorem{lemma}[theorem]{Lemma}
\newtheorem{corollary}[theorem]{Corollary}

\newtheorem{algorithm}[theorem]{Algorithm}

\newtheorem{definition}[theorem]{Definition}
\newtheorem{example}[theorem]{Example}

\def\pont{\hspace{-6pt}{\bf.\ }}
\newcommand\ceil[1]{\left\lceil#1\right\rceil}
\newcommand\floor[1]{\left\lfloor#1\right\rfloor}

\def\beq{\begin{equation}}\def\eeq{\end{equation}}
\def\beqn{\begin{eqnarray}}\def\eeqn{\end{eqnarray}}
\def\pont{\hspace{-6pt}{\bf\ }}

\def\qed{\ifhmode\unskip\nobreak\fi\quad\ifmmode\Box\else$\Box$\fi}

\begin{document}
\title{Remarks on the distribution of colors in Gallai colorings}
\author{Joseph Feffer$^{a}$\footnote{Corresponding author, Present address: Department of Mathematics, Harvard University, 1 Oxford St., Cambridge, MA 02138, United States;  jrfeffer@college.harvard.edu}, Yaoying Fu$^a$\footnote{Present address: Depament of Mathematics, Hong Kong University, Run Run Shaw Building, Pok Fu Lam Road, Hong Kong, China; leah1127@connect.hku.hk}, Jun Yan$^a$\footnote{Present address: Department of Mathematics, University of Waterloo, 200 University Ave W, Waterloo, ON N2L 3G1, Canada; j228yan@uwaterloo.ca}\\\\$^a$ Budapest Semesters in Mathematics REU\\ Alfr\'ed R\'enyi Institute of Mathematics, Hungarian Academy of Sciences\\ Bethlen G\'abor t\'er 2, Budapest, 1071 Hungary} 
\maketitle

\begin{abstract}
A Gallai coloring of a complete graph $K_n$ is an edge coloring without triangles colored with three different colors. A sequence $e_1\ge \dots \ge e_k$ of positive integers is an $(n,k)$-sequence if $\sum_{i=1}^k e_i=\binom{n}{2}$. An $(n,k)$-sequence is a G-sequence if there is a Gallai coloring of $K_n$ with $k$ colors such that there are $e_i$ edges of color $i$ for all $i,1\le i \le k$. Gy\'arf\'as, P\'alv\"olgyi, Patk\'os and Wales proved that for any integer $k\ge 3$ there exists an integer $g(k)$ such that every $(n,k)$-sequence is a G-sequence if and only if $n\ge g(k)$. They showed that $g(3)=5, g(4)=8$ and $2k-2\le g(k)\le 8k^2+1$.

We show that $g(5)=10$ and give almost matching lower and upper bounds for $g(k)$  by showing that with suitable constants $\alpha,\beta>0$, $\frac{\alpha k^{1.5}}{\ln k}\le g(k) \le \beta k^{1.5}$ for all sufficiently large $k$.

\end{abstract}

\textit{Keywords}: Gallai coloring, Gallai sequence, graph decomposition

\section{Introduction}
Gallai colorings (a term introduced in \cite{GYS} referring to a concept of Gallai \cite{GAL}) of complete graphs are edge colorings that do not contain triangles colored with three different colors. We shall abbreviate Gallai colorings as G-colorings. Observe that every $2$-coloring of the edges of the complete graph $K_n$ is a G-coloring but the number of colors in a G-coloring is not fixed.

Ramsey-type problems for G-colorings have been investigated in several papers such as \cite{GYS,GYSSS,LT,MS} and the statistical behaviour of the number of G-colorings have been studied in \cite{Betal,BBH,BL}. A recent work \cite{GyPPW} explored the distribution of colors in G-colorings and this note contributes to a problem exposed there.

We call a sequence $e_1,\dots ,e_k$ of nonnegative integers an $(n,k)$-sequence if $\sum_{i=1}^k e_i=\binom{n}{2}$. An $(n,k)$-sequence is a G-sequence if there is a G-coloring of $K_n$ with $k$ colors such that there are $e_i$ edges of color $i$ for all $i,1\le i \le k$.  A sequence $e_1,\dots ,e_k$ is {\em ordered} if $e_1\ge\dots\ge e_k$.

The following decomposition theorem  plays a central role in the theory of G-colorings. It was formulated in \cite{GYS} but implicitly it was already in \cite{GAL}.

\begin{theorem}\pont[Theorem A, \cite{GYS}]\label{decomp}
Assume that we have a G-coloring on $K_n$ with at least three colors. Then there exist at most two colors, say $1,2$, and a decomposition of $K_n$ into $m\ge 2$ vertex disjoint complete graphs $K_{n_i}$ ($1\le i \le m$) so that all edges between $V(K_{n_i})$ and $V(K_{n_j})$ are colored with the same color and that color is either $1$ or $2$.
\end{theorem}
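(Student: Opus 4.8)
The plan is to build the decomposition from the most elementary consequence of the Gallai condition and then promote it to a full partition by induction on $n$. The fundamental local fact I would isolate first is a \emph{neighborhood lemma}: fix a vertex $v$ and, for each color $c$, let $N_c$ be the set of vertices joined to $v$ by a color-$c$ edge. If $x\in N_a$ and $y\in N_b$ with $a\ne b$, then the triangle $vxy$ already carries the two colors $a,b$, so the absence of a rainbow triangle forces the color of $xy$ to lie in $\{a,b\}$. Thus between any two color-neighborhoods of $v$ only the two corresponding colors appear; this is the engine that keeps the number of colors on a cut small.

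Next I would record two \emph{module} observations that convert a sparse cut into the required monochromatic partition, where I call $M\subseteq V$ a \emph{module} if every vertex outside $M$ joins all of $M$ in one color. First, if for some color $i$ the complement graph $\overline{G_i}$ (the edges not colored $i$) is disconnected, then its components are modules and every edge between two of them is forced to be color $i$; the components then give a decomposition whose quotient uses the single color $i$. Second, and more generally, if $T=\{a,b\}$ is a pair of colors and $H=\bigcup_{c\notin T}G_c$ is disconnected, the same triangle argument shows the components of $H$ are modules, any two of them are joined monochromatically, and the joining color lies in $\{a,b\}$; this is precisely a decomposition with quotient colors contained in $\{a,b\}$. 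Conversely, any valid decomposition with quotient colors $\{a,b\}$ places every edge of $H$ inside a part, so $H$ is disconnected. Hence the whole theorem is equivalent to the statement: \emph{there is a set $T$ of at most two colors for which $\bigcup_{c\notin T}G_c$ is disconnected}, equivalently, $V$ has a nontrivial bipartition whose crossing edges use at most two colors.

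To produce such a cut I would induct on $n$. If some vertex $v$ is incident to at most two colors, the star at $v$ already gives a cut $\{v\}$ versus $V\setminus\{v\}$ using at most two colors, and we are done; so assume every vertex sees at least three colors. Deleting $v$ and applying the induction hypothesis yields a decomposition of $K_{n-1}$ into parts $U_1,\dots,U_p$ with $p\ge 2$, whose quotient uses only two colors $\alpha,\beta$ and whose part-pairs are monochromatic. It then remains to reinsert $v$: I would examine the colors on the edges from $v$ to each $U_i$, using the neighborhood lemma to control them, and either (a) attach $\{v\}$ as a new part when $v$ meets every $U_i$ in a single color drawn from $\{\alpha,\beta\}$, or (b) otherwise absorb $v$ into the one part $U_i$ with which it interacts nontrivially, checking that the enlarged set remains a module and that the quotient still uses only two colors.

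The hard part will be exactly this reinsertion step (b): when $v$ sends more than one color into the existing parts, one must show that all such deviations are confined to a single part and are compatible with its internal structure, so that merging $v$ there preserves both the monochromaticity of the cross-pairs and the two-color bound on the quotient. Controlling this interaction—equivalently, establishing directly that a Gallai coloring with at least three colors always admits a nontrivial cut spanned by at most two colors—is the crux on which the entire decomposition rests; every other step is a routine application of the triangle condition.
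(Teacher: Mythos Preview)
The paper does not prove this theorem at all: it is quoted verbatim as Theorem~A from \cite{GYS} (with credit to Gallai~\cite{GAL}) and is used as a black box throughout the rest of the paper. There is therefore no ``paper's own proof'' against which to compare your proposal.

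As for your sketch on its own merits: the reformulation---that one must find a nontrivial bipartition of $V$ whose crossing edges use at most two colors---is correct and is indeed equivalent to the statement. Your module observations and neighborhood lemma are sound. The genuine gap is precisely where you locate it: the reinsertion step~(b). When $v$ sends a color $\gamma\notin\{\alpha,\beta\}$ into some part $U_i$, you want to absorb $v$ into $U_i$, but then for every $j\ne i$ you need all edges from $v$ to $U_j$ to carry the \emph{same} color as the monochromatic $U_i$--$U_j$ block. The neighborhood lemma gives you only that each such edge lies in a two-element set, and stitching these constraints together into the exact color required is not automatic; a single vertex $v$ seeing three or more colors can interact with the inherited partition in ways that force you to coarsen or refine it, not merely augment one part. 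The classical arguments (Gallai's original via comparability graphs, or the short proof in \cite{GYS} via a maximal monochromatic connected subgraph) sidestep this by working with a global extremal object rather than by one-vertex induction, which is why the literature does not present the result this way. Your plan is not wrong in spirit, but as written it stops exactly at the step that carries all the content.
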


We call the (at most) two colors in Theorem \ref{decomp} {\em base colors}, and call the edges within the disjoint complete graphs {\em internal edges}. Theorem \ref{decomp} can be stated in a slightly stronger form observing that if one of the base colors does not span a connected subgraph of $K_n$ then the other base color provides a decomposition of $K_n$ with one base color, spanning a connected subgraph of $K_n$. This leads to the following corollary.

\begin{corollary}\label{bigbase} There exists a decomposition according to Theorem \ref{decomp} where all (two or one) base colors span a connected subgraph of $K_n$. Thus in a suitable decomposition, all base colors have at least $n-1$ edges.
\end{corollary}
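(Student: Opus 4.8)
The plan is to pass from the coloring on $K_n$ to the induced $2$-coloring of the ``reduced'' complete graph $R = K_m$ whose vertices are the parts $V(K_{n_i})$, where the edge between part $i$ and part $j$ receives the base color ($1$ or $2$) shared by all edges between $V(K_{n_i})$ and $V(K_{n_j})$. The key translation is that a base color $c$ spans a connected subgraph of $K_n$ covering all $n$ vertices precisely when the color-$c$ edges of $R$ form a connected spanning subgraph of $R$: given such a structure on the $m$ parts, any two vertices of $K_n$ can be joined by a color-$c$ path routed through the parts, and every vertex lies in a part incident to a color-$c$ reduced edge. Since a connected spanning subgraph of $K_n$ has at least $n-1$ edges, it then suffices to produce a decomposition in which each base color is connected and spanning in the reduced graph.

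Next I would set up a dichotomy. Starting from any decomposition given by Theorem \ref{decomp}, either (i) both color classes of $R$ are connected and spanning, in which case I keep the decomposition and both base colors already span connected subgraphs of $K_n$; or (ii) some base color, say $2$, is disconnected in $R$. In case (ii) I coarsen the decomposition: let $S_1, \dots, S_{m'}$ be the vertex sets of $K_n$ obtained by merging all parts lying in a common connected component of the color-$2$ subgraph of $R$. Each $S_a$ is a vertex subset of $K_n$ and hence induces a complete graph, so $\{S_a\}$ is again a decomposition of the type in Theorem \ref{decomp}, with the color-$2$ edges between merged parts now absorbed as internal edges.

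The crucial verification is that this coarsened decomposition has a single base color. If parts $i, j$ lie in distinct color-$2$ components, then the reduced edge $ij$ cannot be color $2$ (otherwise $i, j$ would be in the same component), so it is color $1$; hence every edge between $S_a$ and $S_b$ with $a \ne b$ is color $1$. Thus color $1$ is the sole base color, the reduced graph on $\{S_a\}$ is monochromatic and complete in color $1$, and is therefore connected and spanning; moreover $m' \ge 2$ because color $2$ was disconnected, so the requirement $m \ge 2$ of Theorem \ref{decomp} is preserved. By the translation above, color $1$ spans a connected subgraph of $K_n$ and so has at least $n-1$ edges.

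I expect the main point requiring care to be the coarsening step and its bookkeeping: confirming that merging parts along color-$2$ components yields a legitimate decomposition in the sense of Theorem \ref{decomp} (parts complete and vertex-disjoint, at least two of them) and that all surviving inter-part edges are forced into the single color $1$. The connectivity-to-edge-count implication and the case in which both colors are already connected are then routine.
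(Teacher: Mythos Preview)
Your proposal is correct and follows the same idea as the paper, which gives only a one-sentence sketch: if one base color fails to span a connected subgraph of $K_n$, merge along its components so that the other base color alone serves as the unique (and necessarily connected) base color. Your reduced-graph formulation and the explicit coarsening along the color-$2$ components is exactly this argument written out in detail; just be sure your dichotomy in (ii) reads ``not connected and spanning'' rather than merely ``disconnected,'' so that the case where color $2$ misses some part of $R$ is formally covered (your coarsening step already handles it correctly).
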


It was proved in \cite{GyPPW} that for any integer $k\ge 2$ there is a (unique) integer $g(k)$ with the following property: there exists a Gallai $k$-coloring of $K_n$ with $e_i$ edges in color $i$ for every $e_1, \dots , e_k$ satisfying $\sum_{i=1}^k e_i={n\choose 2}$, if and only if $n\ge g(k)$. The bounds $2k-2\le g(k)\le 8k^2+1$ were given in \cite{GyPPW}. It was also proven in \cite{GyPPW} that $g(3)=5$ and $g(4)=8$. In this paper, we improve the upper and lower bounds and show that $g(5)=10$. Specifically, we prove the following theorems:

\begin{theorem}\label{upperb} There exists a constant $\beta>0$ such that $g(k) \le \beta k^{3/2}$ for all sufficiently large $k$. In fact, $\beta\le \beta(k)$, where $\beta(k)\rightarrow 2\sqrt 3$ as $k\rightarrow\infty$.
\end{theorem}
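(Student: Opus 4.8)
The plan is to realize an arbitrary ordered $(n,k)$-sequence $e_1\ge\dots\ge e_k$ inside $K_n$ by building an explicit Gallai coloring whose skeleton is a nested (caterpillar-type) decomposition of the sort guaranteed by Theorem \ref{decomp}. I would introduce the colors one at a time in decreasing order: to install color $i$, peel a piece of $s_i$ vertices off the current \emph{core} (the unprocessed clique), color all $s_iW_i$ edges between the piece and the core with color $i$, where $W_i$ is the core size, and then continue recursively inside both pieces. Each such step leaves every part monochromatically joined to the rest, so the whole coloring is Gallai by Theorem \ref{decomp}, and I would keep colors $1$ and $2$ in reserve as the base colors that mop up the remaining crossing edges; Corollary \ref{bigbase} guarantees they will carry at least $n-1$ edges, giving room to tune them to $e_1$ and $e_2$ at the end.

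The first observation I would establish is that \emph{vertex budget is essentially never the bottleneck}. Writing $T_i=\sum_{j\ge i}e_j$ for the tail sums, installing colors largest-first keeps the core at $W_i\approx\sqrt{2T_i}$, so a full crossing layer costs $s_i\approx e_i/\sqrt{2T_i}$ vertices, and since $e_i=T_i-T_{i+1}$ a telescoping (Riemann-sum) comparison gives $\sum_i s_i\le\sqrt{2T_1}=\sqrt{2\binom n2}\approx n$. Thus the peeling alone spends at most about $n$ vertices regardless of the sequence. The genuine difficulty, and the source of the $k^{3/2}$ growth, is \emph{hitting each $e_i$ exactly}: a single peel of $s$ vertices at core $W$ produces only the sizes in the discrete window $[\,sW,\ sW+\binom s2\,]$ (the upper end coming from recoloring internal edges of the piece with color $i$). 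When $e_i$ is small relative to the smallest core available, these windows leave gaps, and such a color cannot be produced by peeling without creating a rainbow triangle.

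For colors that fall into a gap I would fall back on \emph{packing}: reserve a private clique $Q_i$ on $a_i=\min\{a:\binom a2\ge e_i\}\approx\sqrt{2e_i}$ vertices, color exactly $e_i$ of its edges with color $i$ and fill the rest with base color $1$, and join $Q_i$ monochromatically to everything else. The whole construction then consists of a cheap peeled skeleton for the colors that land on a usable layer, plus a collection of disjoint packed cliques for the ``awkward'' colors, all glued together by the two base colors. The main quantitative step is to bound the total packing cost $\sum_{\text{packed}}\sqrt{2e_i}$: one shows that under the constraints $\sum e_i=\binom n2$ and $e_1\ge\dots\ge e_k$ the worst case has order $k$ colors of size $\Theta(k)$, each forced into a clique of size $\Theta(\sqrt k)$, for a total of $\Theta(k^{3/2})$ vertices. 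Balancing this cost against the available budget and optimizing the threshold separating the peeled and packed regimes is what yields $n\ge\beta(k)k^{3/2}$ with $\beta(k)\to2\sqrt3$.

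The hard part will be the simultaneous bookkeeping in the transition region, where the peeled skeleton, the packed cliques, and the two base colors all meet. Concretely, I expect the delicate points to be: (i) proving that after all peeling and packing the leftover crossing edges can always be split between colors $1$ and $2$ so as to hit $e_1$ and $e_2$ \emph{exactly}, for which the slack from Corollary \ref{bigbase} (at least $n-1$ base edges) should suffice; (ii) controlling integrality, so that an admissible peel size $s_i$ or clique size $a_i$ is always available and the windows $[sW,sW+\binom s2]$ genuinely cover everything outside the packed set; and (iii) pinning down the extremal profile precisely enough to extract the constant $2\sqrt3$ rather than a weaker constant. I would isolate (i)--(ii) as technical lemmas and reserve the sharp optimization for the end, since that is where the exact value of $\beta(k)$ is determined.
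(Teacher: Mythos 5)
There is a genuine gap, and it is not merely in the points you flag as ``delicate'': the architecture itself breaks on the gluing budget. Your plan reserves colors $1$ and $2$ to ``mop up'' all crossing edges left after peeling and packing, but take your own worst-case profile: $\Theta(k)$ packed colors, each in a private clique of size $\Theta(\sqrt k)$, gives $\Theta(k^{3/2})$ packed vertices in a graph on $n=\Theta(k^{3/2})$ vertices, so the edges between distinct packed cliques and between the packed region and the skeleton number $\Theta(k^{3})$ --- the same order as $\binom n2$. Meanwhile nothing forces $e_1+e_2$ to be larger than $O\bigl(\binom n2 /k\bigr)=O(k^{2})$ (e.g.\ when the large entries are nearly equal), so two reserved base colors are short by a factor of $k$. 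Any repair must let many colors serve as glue at many scales, which destroys the exact bookkeeping for those colors --- and exact realization of every $e_i$ is the crux of the theorem, not a technicality to be deferred. You also misapply Corollary \ref{bigbase}: it is a \emph{necessary} condition on decompositions of Gallai colorings that already exist (base colors span connected subgraphs, hence have at least $n-1$ edges); it provides no constructive slack for ``tuning'' $e_1,e_2$ at the end. Finally, the window-covering claim for peels, the extremal packing profile, and the emergence of the constant $2\sqrt3$ are all asserted without proof.

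The paper's proof avoids every one of these problems by never aiming a gadget at a particular $e_i$. Algorithm \ref{alg} greedily cuts the largest current clique $K_a$ into $K_{a_1}\cup K_{a_2}$ using \emph{any} color whose current value is at least $a_1a_2$, decrementing that entry; exactness is then automatic, since the algorithm succeeds precisely when all parts reach $K_1$, at which point the number of colored edges is $\binom n2=\sum_i e_i$, forcing each color to be used exactly $e_i$ times. The real work is proving the algorithm cannot get stuck: irreducibility classes $I^n_p(m)$ record where a branch can die, Lemma \ref{takeout} kills all $p\ge 2k-1$, Lemma \ref{yeet} shows that each increment of the total sum eliminates one more stopping point, and Corollary \ref{moreedges} concludes that a $(2k,k)$-sequence with surplus $\frac{3k^2-7k+2}{2}$ is always colorable. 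The descent from $K_n$ to $K_{2k}$ then banks exactly this surplus inside small peeled components (of size at most $n_0$), and the computation showing the cubic expression (\ref{eq2.4}) is positive for $n_0\to\sqrt{3k}$, with $n=2k(n_0+1)$, is what produces $\beta(k)\to 2\sqrt3$ --- a mechanism entirely different from the packing-versus-budget balance you conjecture. As it stands, your proposal is a plan whose decisive steps (window coverage, glue accounting, exact splitting) are missing, and at least the glue accounting fails quantitatively.
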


\begin{theorem}\label{lowerb}
There exists a constant $\alpha>0$ such that $g(k)\ge \frac{\alpha k^{1.5}}{\ln k}$ for all sufficiently large $k$. In fact,  $\alpha \ge \alpha(k)$, where $\alpha(k)\rightarrow 1$ as $k\rightarrow\infty$. 
\end{theorem}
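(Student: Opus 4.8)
The plan is to prove the contrapositive, made quantitative: to certify $g(k)\ge N$ it suffices to produce a single ordered $(N-1,k)$-sequence that fails to be a G-sequence, and I will do this non-constructively by counting. Writing $n=N-1$ and $M=\binom{n}{2}$, I would compare the number $P(n,k)$ of ordered $(n,k)$-sequences (that is, partitions of $M$ into at most $k$ positive parts) with the number $Q(n,k)$ of those that are actually G-sequences. If $P(n,k)>Q(n,k)$ for a given $n$, then some ordered $(n,k)$-sequence is not a G-sequence, whence $g(k)>n$. The whole argument thus reduces to showing that $\log P(n,k)$ is large while $\log Q(n,k)$ is small, and then locating the crossover value of $n$.

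For the lower bound on $P(n,k)$ I would use standard partition asymptotics. In the target regime $n\approx k^{3/2}/\ln k$ one has $M\approx n^2/2\gg k^2$, so the parts of a typical partition are much larger than $k$ and repetitions are rare; hence $P(n,k)$ is well approximated by $\binom{M-1}{k-1}/k!$, and Stirling's formula gives $\log P(n,k)=2k\log n-2k\log k+O(k)$. Substituting $n=\alpha k^{3/2}/\ln k$ this becomes $\log P(n,k)=k\log k-2k\log\ln k+O(k)$, which is the source of the $\ln k$ that survives into the denominator of the final bound.

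The crux is the upper bound on $Q(n,k)$: I must show that a Gallai $k$-coloring of $K_n$ can realize only few distinct color-size profiles, namely $\log Q(n,k)\le 2k\log n-2k\log k+O(k)$, matching the leading terms of $\log P$. Here I would lean on Theorem \ref{decomp} and Corollary \ref{bigbase}, applied recursively to obtain a Gallai tree for the coloring. The key structural observation is that the multiset of color sizes is governed by a compressed version of this tree: contracting every internal node that introduces no fresh base color leaves only $O(k)$ relevant nodes, each carrying at most two base colors, a list of child block sizes, and a split of its cross-edges among its (at most two) base colors. Encoding a profile then amounts to recording $O(k)$ block sizes drawn from $\{1,\dots,n\}$ together with $O(k)$ cross-edge splits; since the block sizes form a laminar (nested) family, their distinct cut values can be chosen in at most $\binom{n}{O(k)}$ ways, and a careful accounting yields a bound of the shape $\binom{n}{k}^{2}c^{k}$, i.e. exactly $\log Q(n,k)\le 2k\log n-2k\log k+O(k)$.

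Finally I would compare the two estimates. Since $\log P$ and $\log Q$ agree in their $2k\log n-2k\log k$ leading terms and differ only in the $O(k)$ terms, the inequality $P(n,k)>Q(n,k)$ holds precisely once $n$ drops below a threshold of the form $\alpha(k)\,k^{3/2}/\ln k$, and tracking the $O(k)$ constants (the $2-\log 2$ from Stirling against the $\log c$ from the encoding) pins down $\alpha(k)\to 1$. The main obstacle I anticipate is the upper bound on $Q(n,k)$: I must ensure that the compressed Gallai tree genuinely captures every profile — in particular that contracting color-preserving nodes loses no size information, since such nodes still contribute cross-edges to existing base colors — and that the constant hidden in the encoding is tight enough to deliver the sharp $\alpha(k)\to 1$ rather than merely some $\alpha>0$.
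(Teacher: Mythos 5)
There is a genuine gap, and it is structural rather than technical: your counting scheme has no resolving power at the scale where the threshold lives. Your two estimates, $\log P(n,k)=2k\log n-2k\log k+O(k)$ and the claimed $\log Q(n,k)\le 2k\log n-2k\log k+O(k)$, have \emph{identical} dependence on $n$; the comparison $P>Q$ would then be decided by uncontrolled $O(k)$ constants, uniformly in $n$, so no statement of the form ``$P>Q$ precisely once $n$ drops below $\alpha(k)k^{3/2}/\ln k$'' can follow from them. Note also that your encoding bound $\binom{n}{k}^{2}c^{k}$ is no stronger than the trivial bound $Q\le P$ (every realizable profile is in particular a profile), so the ``crux'' step, even if carried out, gains nothing. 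The situation is worse than a missing constant: by Theorem \ref{upperb} one has $Q(n,k)=P(n,k)$ exactly for all $n\ge\beta k^{3/2}$, so any counting proof must exhibit an upper bound on $Q$ whose $n$-dependence genuinely separates from $\log P$ inside the window $k^{3/2}/\ln k\lesssim n\lesssim k^{3/2}$ — i.e.\ precision within the $O(k)$ term of the exponent — and proving such a bound on the number of realizable profiles is essentially equivalent to the theorem itself. The compression step also fails on its own terms: the blocks of the recursive decomposition form a laminar family with up to $2n-1$ sets, not $O(k)$, and the nodes you contract (those reusing old base colors) each contribute their own cross-edge counts to existing colors; discarding their sizes discards exactly the data that determines the profile, as you yourself suspected. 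Finally, your diagnosis of where $\ln k$ comes from is a red herring: the $-2k\log\ln k$ term appears on both sides of your comparison and cancels.

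For contrast, the paper's proof is a direct construction, not a counting argument. It takes one explicit $(f,k)$-sequence with $\lceil k/2\rceil$ large entries of size roughly $f^{2}/k$ and $\lfloor k/2\rfloor$ small entries of size $b=3\lceil f/\sqrt{k}\rceil$. Corollary \ref{bigbase} forces every base color to be a large entry as long as components have size at least $b+2$; Lemma \ref{boundcomp}, combined with the fact that at most two base colors (hence at most $2(a+1)$ cut edges) are available per step, shows that decomposing $K_x$ can peel off at most $h(x)=\lceil 3(a+1)/x\rceil$ vertices. Summing the cross-edge cost along the chain of largest components gives at least $\sum_{x=b+2}^{f}\left(x-h(x)\right)$ edges needed from the large colors, and it is the harmonic sum $\sum_x h(x)\approx 3(a+1)\ln(f/b)\approx\tfrac{3}{2}(a+1)\ln k$ — a cost accumulated over decomposition steps — that produces the $\ln k$ in the denominator; comparing with the large colors' total edge budget yields the contradiction, and tracking the constants gives $\alpha(k)\to1$. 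If you want to rescue your proposal, you would need to replace the entropy comparison by exactly this kind of per-step edge accounting; as written, the argument cannot be completed.
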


\begin{theorem}\label{g5}
 The $(9,5)$-sequence $12,6,6,6,6$ is not a G-sequence but all $(10,5)$-sequences are G-sequences. Thus $g(5)=10$.
\end{theorem}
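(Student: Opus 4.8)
\emph{Plan.} The statement has two halves, and I would treat them separately. The backward half (that $12,6,6,6,6$ is not a G-sequence) is the clean one, and I would prove it by applying Theorem~\ref{decomp} together with Corollary~\ref{bigbase} twice. Suppose such a G-coloring of $K_9$ existed. By Corollary~\ref{bigbase} there is a decomposition in which every base color spans a connected subgraph, hence has at least $n-1=8$ edges; among our counts only color $1$ (with $12$ edges) reaches $8$, so there is exactly one base color, namely color $1$. The between-part edges are all color $1$, so their number $36-\sum_i\binom{n_i}{2}$ is at most $12$, forcing $\sum_i\binom{n_i}{2}\ge 24$. A quick check of the partitions of $9$ into at least two parts shows that only $\{8,1\}$ achieves this (it gives $28$). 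Thus $K_9$ splits into a $K_8$ and a $K_1$ with the $8$ between-edges colored $1$, leaving color $1$ with $12-8=4$ internal edges, all inside the $K_8$, while colors $2,3,4,5$ (six edges each) also lie entirely inside the $K_8$. Hence the $K_8$ carries a Gallai coloring with ordered counts $6,6,6,6,4$. Applying Corollary~\ref{bigbase} again to this $K_8$, any base color must have at least $7$ edges, but every color has at most $6$; since the decomposition of the $K_8$ has at least two parts it needs at least one base color, a contradiction. This finishes the first half.

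For the forward half (every $(10,5)$-sequence is a G-sequence) I would build colorings from the two facts already available, namely that every $(m,3)$-sequence is a G-sequence for $m\ge g(3)=5$ and every $(m,4)$-sequence is a G-sequence for $m\ge g(4)=8$, combined with a \emph{peeling lemma}: if $(f_1,\dots,f_5)$ is a G-sequence for $K_{n-1}$, then adding $n-1$ to any single $f_c$ gives a G-sequence for $K_n$, obtained by adjoining one vertex joined to all others in color $c$ (a monochromatic star creates no rainbow triangle through the new vertex). Equivalently, $(e_1,\dots,e_5)$ is realizable on $K_n$ whenever some $e_c\ge n-1$ and the sequence with $e_c$ lowered by $n-1$ is realizable on $K_{n-1}$. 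Since $e_1\ge 45/5=9$ always holds, $K_{10}$ always admits color $1$ as a candidate base color, which is exactly the feature the bad $K_9$-sequence lacked.

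Using the peeling lemma I would dispose of the ``unbalanced'' sequences first. If some color equals $9$, peeling it off reduces to a $(9,4)$-sequence on $K_9$, realizable because $9\ge g(4)=8$; more generally, peeling color $1$ down through $K_9,K_8$ lets one zero out the smallest color at the right stage and land on an $(8,4)$- or $(9,4)$-sequence. The cases that resist this are the \emph{balanced} sequences, where every color is moderate and $e_5$ is small, so that peeling drives $n$ below $8$ before a color can be eliminated. For these I would abandon single-vertex peeling and instead use the full decomposition with \emph{two} base colors: choose a partition of the $10$ vertices, $2$-color the reduced complete graph on the parts (any $2$-coloring is Gallai) to account for $e_1$ and $e_2$, and place realizable small colorings inside the parts using colors $3,4,5$, invoking $g(3)=5$ for parts of size at least $5$.

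The main obstacle is exactly this balanced regime: I expect the reductions above to clear all but a finite list of near-balanced ordered partitions of $45$ into five parts, and each of these must be produced by an explicit two-base-color substitution, with the delicate point being to verify that the available freedom (choice of partition, of the $2$-coloring pattern, and of the internal three-color counts) really does cover \emph{every} remaining sequence. This is a finite but careful case analysis, and organizing it so that no balanced sequence slips through is the part I would expect to take the most work.
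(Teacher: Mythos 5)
Your first half is correct and is essentially the paper's own argument, carried out in slightly more detail: only color $1$ can serve as a base color by Corollary~\ref{bigbase}, the internal-edge count forces the partition $\{8,1\}$, and the induced sequence $(6,6,6,6,4)$ on $K_8$ violates Corollary~\ref{bigbase} again. That part needs no changes.

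The forward half, however, is a plan rather than a proof, and the gap is precisely the content that occupies most of Section~\ref{sectg5}. Your peeling lemma (adjoin a vertex via a monochromatic star) is sound and is exactly the paper's $K_{1,9}$, $K_{1,8}$, \dots\ removals, but the step ``peeling color $1$ down through $K_9,K_8$ lets one zero out the smallest color at the right stage and land on an $(8,4)$- or $(9,4)$-sequence'' does not go through uniformly: in many balanced cases the reduction inevitably lands on $K_7$ or $K_6$, and since $g(4)=8$ these are \emph{not} automatically realizable. One must classify the exceptional sequences --- the paper's Lemma~\ref{spec} shows $(9,4,4,4)$ fails on $K_7$ and $(7,4,2,2)$, $(7,3,3,2)$, $(6,3,3,3)$, $(4,4,4,3)$ fail on $K_6$ --- and then, for every reduction route, track the ``suspicious'' $(10,5)$-sequences that would hit one of these and reroute them through a different decomposition (this is the bulk of Lemma~\ref{terms} and the four-case analysis on the number of terms $\ge 10$). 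Without that classification, your scheme cannot certify that a reduction succeeds, and without the exhaustive enumeration you cannot certify that no sequence slips through; you candidly flag this, but it is the theorem.

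Your fallback for the balanced regime (two base colors between the parts of a partition, with $g(3)=5$ inside parts of size at least $5$) also has a concrete failure mode as stated: the between-part edge count is fixed by the partition, so leftover edges of a base color must be placed inside a part, and a part of size $5$ then carries a $(5,4)$-sequence, which $g(3)=5$ does not cover and which $g(4)=8$ does not guarantee. The paper's Lemma~\ref{big} shows how to make this idea rigorous, but only in the unbalanced case $e_1\ge 25$: use one color for all $25$ cross edges between two copies of $K_5$, then split the remaining colors by the alternating-sum trick ($a_2,a_4$ into one copy, $a_3,a_5$ into the other, $a_1$ filling both) so that \emph{each} copy sees at most three colors and $g(3)=5$ applies. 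For the genuinely balanced sequences (e.g.\ $(11,11,11,6,6)$ or $(12,12,12,6,3)$) the paper does not use two base colors at all; it uses tailored decompositions such as $K_{10}\rightarrow K_6\cup K_3\cup K_1$ and $K_{10}\rightarrow K_5\cup K_4\cup K_1$ with explicit edge accounting. So your outline points at the right toolbox, but the missing pieces --- Lemma~\ref{spec}, the suspicious-sequence bookkeeping, and the complete case split --- are not routine to fill in and constitute the actual proof.
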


In Section \ref{ubsect} we prove Theorem \ref{upperb} with an algorithm whose input is an $(n,k)$-sequence with an arbitrary $n$ satisfying $n\ge \beta(k) k^{3/2}$ and whose output is a $G$-coloring of $K_n$. The algorithm works using a series of simple decompositions called {\em cuts} that use one base color to separate the graph into $2$ parts according to Corollary \ref{bigbase}.  The algorithm ensures that the iterated decomposition process never stops with a part $K_a$ with $a>1$ because there is always a term in the actual sequence with value at least $a-1$ .

Section \ref{lb} is devoted to proving Theorem \ref{lowerb} regarding lower bounds on $g(k)$. We explicitly construct a family of color sequences and show that any possible decomposition steps for these sequences cannot decompose the complete graph down past a certain size. This implies that these sequences are not G-sequences and provide a lower bound on $g(k)$.

Section \ref{sectg5} gives the proof of Theorem \ref{g5}, reducing $(10,5)$-sequences by cuts to $4$-sequences realizable by G-colorings on a smaller complete graph. Although we found some tools to limit the number of cases within a plausible range, the full analysis is rather long, predicting that it is difficult to determine $g(k)$ exactly.

\newpage

\section{Proof of Theorem \ref{upperb}}\label{ubsect}

The proof of Theorem \ref{upperb} is separated into four subsections. The first defines an algorithm to provide G-colorings using simple decomposition steps. For technical reasons it is convenient to extend the definition of $(n,k)$-sequences allowing sequences $e_1,\dots,e_k$ with $\sum_{i=1}^k e_i \ge {n\choose 2}$ edges. The second subsection partitions $(n,k)$-sequences that are not G-colorable by the algorithm into classes, called {\em irreducibility classes}. In the third subsection we show that each class of $(2k,k)$-sequences has G-colorings if $\sum_{i=1}^k e_i$ is large enough. In the last subsection we prove Theorem \ref{upperb}.

\subsection{A $G$-Coloring Algorithm}

\begin{definition} Assume that $K_n$ is partitioned into two parts, a $K_{n-j}$ and a $K_j$.  The set of edges between them is called a {\bf cut}.
\end{definition}

\begin{algorithm}\label{alg}
For a given input $(n,k)$-sequence $s=(e_1,\ldots,e_k)$ (allowing $\sum_{i=1}^k e_i \ge {n\choose 2}$ as discussed above) we define an algorithm to G-color $K_n$ using only cuts to color all edges of the cut with the same color. A ``branch" of the algorithm works as follows:
\begin{enumerate}
    \item Define $S$ to be a set that will contain vertex disjoint complete graphs whose union contains all $n$ vertices. Initially let $S=\{K_n\}$.
    \item Pick $K_a\in S$ such that $a\ge b$ $\forall$ $K_b\in S$ and remove $K_a$ from $S$.
    \item Partition $K_a$ into two vertex disjoint complete subgraphs $K_{a_1}$ and $K_{a_2}$ so that $a_1a_2\le e_i$ for some $i$. If there are multiple colors that satisfy this inequality, choose one to be $i$. Color all edges of the cut $K_{a_1},K_{a_2}$ with color $i$. Adjust the actual sequence $e_1,\ldots,e_k$ by replacing $e_i$ with $e_i-a_1a_2$ and go to the next step (step 4).  If there is no cut with the condition, i.e. $e_i<a-1$ for all $i\in [k]$, exit this loop. In this case, he branch is terminated and called {\bf irreducible at $K_a$}.
    \item Add $K_{a_1}$ and $K_{a_2}$ to $S$.
    \item As long as $S$ contains a graph with more than one vertex, repeat from step $2$.
\end{enumerate}
\end{algorithm}

Each branch of algorithm \ref{alg} can be represented by a path on a graph where vertices correspond to iterations within the branch and are labeled with the actual partition $S$ and with the actual sequence $(e_1,...,e_k)$ at the beginning of the iterations. Furthermore, all of these branches can be linked together on a tree (see Example \ref{ex1}) with $S=\{K_n\}$ and the initial $(n,k)$-sequence at the root. The $G$-coloring algorithm is then just a depth-first search on this tree and will stop at the first place where $S=\{K_1,...,K_1\}$. If this never happens then all branches are irreducible. In this case the input sequence of the algorithm is called {\em irreducible} as well.\\
\begin{example}\label{ex1}
The following shows three branches of the decomposition of $K_{8}$ with the sequence $(14,8,3,3)$. Terms in $S$ that are $K_1$'s are not shown for brevity. Terms are not reordered after cuts for the sake of clarity:\\

\hspace*{-1em}\begin{tikzpicture}
(0,.25) node{${\{K_{8}\},(14,8,3,3)}$};
\draw (-1.5,0) -- (-4.5,-1) node [at end, below] {${\{K_{7}\},(7,8,3,3)}$};
\draw (-4.5,-1.75) -- (-4.5,-2.75) node [at end, below] {$\{{K_{6}\},(7,2,3,3)}$};
\draw (-4.5,-3.5) -- (-4.5,-4.5) node [at end, below] {$\{{K_{5}\},(2,2,3,3)}$};
\draw (-1.5,0) -- (1.5,-1) node [at end, below] {${\{K_{6},K_2\},(2,8,3,3)}$};
\draw (1.5,-3.5+1.75) -- (-0.5,-4.5+1.75) node [at end, below] {${\{K_{5},K_2\},(2,3,3,3)}$};
\draw (1.5,-3.5+1.75) -- (3.5,-4.5+1.75) node [at end, below] {$\{K_{4},K_2,K_2\}, (2,0,3,3)$};
\draw (3.5,-5.25+1.75) -- (3.5,-6.25+1.75) node [at end, below] {$\{K_{3},K_2,K_2\}, (2,0,0,3)$};
\draw (3.5,-7+1.75) -- (3.5,-8+1.75) node [at end, below] {$\{K_{2},K_2,K_2\}, (0,0,0,3)$};
\draw (3.5,-8.75+1.75) -- (3.5,-9.75+1.75) node [at end, below] {$\{K_2,K_2\}, (0,0,0,2)$};
\draw (3.5,-8.75) -- (3.5,-9.75) node [at end, below] {$\{K_2\}, (0,0,0,1)$};
\draw (2,-9.75-.3) -- (-1.5,-11) node [at end, below] {$\{\}, (0,0,0,0)$};
\end{tikzpicture}
\end{example}

\subsection{Irreducibility Classes}

For an irreducible $(n,k)$-sequence each branch of algorithm \ref{alg} is irreducible and stops with a $K_p$ and a sequence $e_1,\dots,e_k$ with $e_i<p-1$ for all $i$. For different attempts to decompose $K_n$, branches may stop when trying to decompose different $K_p$'s. However, there are a finite number of distinct partial decompositions of $K_n$ that can be attempted, so we can define a minimum $p$ for each sequence. We call this $p$ the lowest stopping point of the decomposition of $K_n$ with this sequence.

\begin{definition}\label{iclass}
 For an $(n,k)$-sequence $s$ with sum $m$, let $f(s)$ be the lowest stopping point of the decomposition of $K_n$ with $s$. Then, for $p\ge 2$, we define the irreducibility class $I^n_p(m)$ as \[I^n_p(m)=\{s:f(s)=p\}\].
\end{definition}

\begin{lemma}\label{takeout}
For $n\ge 2kj-j+1$, a decomposition of $K_n$ with an $(n,k)$-sequence into two subgraphs $K_{n-j}$ and $K_{j}$ can always be performed. Additionally, for $j=1$, a decomposition can be performed for $n=2k-1$.
\end{lemma}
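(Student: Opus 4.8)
The plan is to reduce the lemma to a single pigeonhole estimate on the largest term of the sequence. To split $K_n$ into $K_{n-j}$ and $K_j$ by one cut, Step 3 of Algorithm \ref{alg} requires a color $i$ with $e_i\ge (n-j)j$, since $(n-j)j$ is exactly the number of edges in the cut. Hence it suffices to show that every $(n,k)$-sequence has some term at least $(n-j)j$ whenever $n\ge 2kj-j+1$. First I would apply pigeonhole to $\sum_{i=1}^k e_i=\binom{n}{2}$ (the argument only improves under the extended convention $\sum_{i=1}^k e_i\ge\binom{n}{2}$), which gives $\max_i e_i\ge \frac{1}{k}\binom{n}{2}=\frac{n(n-1)}{2k}$. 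So it is enough to verify the real inequality
\[ \frac{n(n-1)}{2k}\ge (n-j)j. \]

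Next I would treat this as a quadratic condition in $n$. Writing $h(n)=\frac{n(n-1)}{2k}-(n-j)j$, a short computation gives $h(2kj-j+1)=\frac{j(j-1)}{2k}\ge 0$, so the inequality holds at the claimed threshold. Since $h''(n)=\frac{1}{k}>0$ and $h'(2kj-j+1)=j-\frac{2j-1}{2k}>0$ for all $j\ge 1$, the derivative stays positive and $h$ is increasing for every $n\ge 2kj-j+1$; thus $h(n)\ge 0$ throughout that range. Combined with the pigeonhole bound this yields $\max_i e_i\ge (n-j)j$, establishing the main statement.

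For the additional claim ($j=1$, $n=2k-1$) the real inequality just fails, and here the argument must invoke integrality. At $n=2k-1$ the average is $\frac{(2k-1)(2k-2)}{2k}=2k-3+\frac{1}{k}$, which is strictly below the required value $j(n-j)=n-1=2k-2$. However, if every term were at most $2k-3$, the total would be at most $k(2k-3)=2k^2-3k<2k^2-3k+1=\binom{2k-1}{2}$, a contradiction; hence some $e_i\ge 2k-2$, and the cut with $j=1$ can be performed.

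The only non-routine point is this last boundary case: the smooth pigeonhole bound falls short by less than a single edge, so the statement for $n=2k-1$ is rescued purely by the fact that the $e_i$ are integers. Everything else is a direct quadratic estimate, so I expect the main obstacle to be presenting the integrality refinement cleanly rather than any genuine conceptual difficulty.
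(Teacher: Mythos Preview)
Your proposal is correct and follows essentially the same approach as the paper: both reduce to the pigeonhole estimate $\max_i e_i\ge \binom{n}{2}/k\ge j(n-j)$, verify the quadratic inequality at the threshold $n=2kj-j+1$ (where $h(n)=j(j-1)/(2k)\ge 0$), and then argue monotonicity in $n$---the paper by a one-step induction, you via the derivative. The treatment of the boundary case $j=1,\ n=2k-1$ is likewise the same integrality argument, phrased as a ceiling computation in the paper and as a contradiction in your write-up.
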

\begin{proof}
For $n = 2kj-j+1$,
\[n(n-1)= (2kj-j+1)(2kj-j) > 2kj(2kj-2j+1)\implies \frac{{n\choose 2}}{k}>j(n-j)\]
We also have that if the previous inequality is true for $n$ and $n\ge kj$,
\[{n+1\choose 2}={n\choose 2}+n>kj(n-j)+kj=kj(n+1-j)\implies \frac{{n+1\choose 2}}{k}>j(n+1-j)\]
Thus for all $n\ge 2kj-j+1$, we have that $\frac{{n\choose 2}}{k}>j(n-j)$. This means there must be some term in the $(n,k)$-sequence that is greater than $j(n-j)$ and can be used as the base color to remove a $K_j$.\\
Additionally, for $j=1$ and $n=2k-1,$
\[\left\lceil \frac{{2k-1\choose 2}}{k}\right\rceil = \left\lceil \frac{(2k-1)(2k-2)}{2k} \right\rceil = \left\lceil 2k-2 - \frac{2k-2}{2k}\right\rceil = 2k-2 \]
Thus there must be a color with at least $(2k-1)-1$ edges so it can be used as a base color to remove a $K_1$.
\end{proof}

\begin{corollary}\label{cor2k-1}
Fix a positive integer $k$. For $n$ and $p\ge 2k-1$ and for all $m\ge {n \choose 2}$, $I^n_p(m)=\emptyset$.
\end{corollary}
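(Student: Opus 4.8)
The plan is to prove the contrapositive at the level of the algorithm: I would show that a branch of Algorithm \ref{alg} can \emph{never} become irreducible at a part $K_p$ of size $p \ge 2k-1$. Since $f(s)$ is by definition the smallest size at which some branch stops, establishing this forces $f(s) \le 2k-2$ for every $(n,k)$-sequence $s$ with sum $m \ge \binom{n}{2}$; in particular $f(s) \ne p$ for any $p \ge 2k-1$, and hence $I^n_p(m) = \emptyset$ in that range. So the whole task reduces to showing that whenever the largest current part has size at least $2k-1$, Step 3 can be carried out.

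First I would freeze the algorithm at the instant it selects the largest current part $K_p$ in Step 2 and inspect the actual sequence $e_1,\dots,e_k$ at that moment. The key bookkeeping claim is that the sum of the actual sequence is always at least $\binom{p}{2}$. The colors consumed so far are precisely the edges lying in the cuts already made, so the remaining budget equals $\bigl(m-\binom{n}{2}\bigr)$ plus the number of still-uncolored edges; those uncolored edges include all internal edges of every complete graph still in $S$ together with the internal edges of $K_p$ itself, which contribute $\binom{p}{2}$. As $m \ge \binom{n}{2}$, the actual sum is therefore at least $\binom{p}{2}$. Now I invoke the averaging estimate of Lemma \ref{takeout} in its $j=1$ form: since the actual sum is at least $\binom{p}{2}$, some term of the actual sequence is at least $\ceil{\binom{p}{2}/k}$. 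For $p \ge 2k$ the inequality $\binom{p}{2}/k > p-1$ from that lemma makes this term exceed $p-1$, while at the boundary value $p = 2k-1$ the ceiling computation $\ceil{\binom{2k-1}{2}/k} = 2k-2 = p-1$ supplies a term equal to $p-1$. In every case there is a color with at least $1\cdot(p-1) = p-1$ edges, exactly enough to cut off a single vertex from $K_p$, so the branch does not terminate at $K_p$.

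I do not anticipate a genuine difficulty, as the analytic heart of the argument is already contained in Lemma \ref{takeout}. The one point demanding care is the bookkeeping in the middle step: one must check that the extended budget $m \ge \binom{n}{2}$, combined with the ``largest part first'' rule of Step 2, really does guarantee that at least $\binom{p}{2}$ units of color remain when $K_p$ is processed, since it is precisely the internal edges of the \emph{largest} surviving part that certify the lower bound on the actual sum. Once this is verified, the conclusion that no branch is irreducible at any part of size $\ge 2k-1$ is immediate, and the corollary follows by taking the minimum over branches in the definition of $f(s)$.
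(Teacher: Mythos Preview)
Your proposal is correct and follows essentially the same approach as the paper: both arguments reduce to invoking Lemma~\ref{takeout} with $j=1$ to show that a part $K_p$ of size $p\ge 2k-1$ can always be cut. The paper's proof is terser---it simply appeals to Lemma~\ref{takeout} after disposing of the trivial case $n<p$---whereas you spell out explicitly the bookkeeping that the actual sequence still has sum at least $\binom{p}{2}$ when $K_p$ is processed, which is exactly the hypothesis needed to apply that lemma.
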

\begin{proof}
If $n<p$ then, because the decomposition algorithm \ref{alg} starts with a $K_n$ and decomposes it, there will never be a $K_p$ in $S$. For $n\ge p$, since $p\ge 2k-1$, a decomposition can always be performed on $K_p$  by Lemma \ref{takeout}.
\end{proof}

\begin{lemma}
 Assume that $k(p-2)\le m$ and $s=(e_1,\ldots,e_k)\in I^n_p(m)$ is an ordered $(n,k)$-sequence with $e_1\ge \dots \ge e_j>p-2\ge e_{j+1}\ge \dots \ge e_k$. Then, for some $q\ge p$, there exists an ordered $(n,k)$-sequence $s'=(f_1,\ldots,f_k)\in I^n_{q}(m)$ with $f_{j+1}=\dots =f_k=p-2$.
\end{lemma}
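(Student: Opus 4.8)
The plan is to reduce the statement to a clean monotonicity principle governing which colors can possibly be used when one merely wants to chop $K_n$ into pieces of size at most $p-1$. First I would record a reformulation of the lowest stopping point. Since $s\in I^n_p(m)$ is irreducible, no branch of Algorithm \ref{alg} ever reaches a state of all $K_1$'s, and I claim that $f(s)=p$ is equivalent to the assertion that \emph{$K_n$ cannot be cut down to a family of complete graphs all of size at most $p-1$ using the budget $s$}. Indeed, if some sequence of cuts reached such a state, then continuing to always cut the largest remaining piece would get stuck with largest piece of size $\le p-1$, yielding a stopping point below $p$ and contradicting minimality; conversely, if no such state is reachable then every branch gets stuck while some piece still has size $\ge p$, so the lowest stopping point is $\ge p$.

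The key observation comes next. In any sequence of cuts that brings $K_n$ down to pieces of size at most $p-1$, we may assume that no piece of size $\le p-1$ is ever cut, so every cut is performed on some $K_a$ with $a\ge p$ and hence has cost $a_1a_2\ge a-1\ge p-1$. Consequently a color can be used at all only if its budget is at least $p-1$, and every color of budget at most $p-2$ is completely irrelevant to such a reduction. In particular the small colors $e_{j+1},\dots,e_k$ play no role whatsoever in deciding whether $K_n$ can be reduced to pieces of size $\le p-1$.

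With this in hand I would construct $s'$ by ``water-filling''. Let $D=\sum_{i=j+1}^k (p-2-e_i)\ge 0$, set $f_{j+1}=\dots=f_k=p-2$, and remove a total of $D$ edges from the large terms $e_1,\dots,e_j$, doing so greedily so that each large term only decreases (hence its new value $f_i\le e_i$) and stays $\ge p-2$. This is feasible because the hypothesis $k(p-2)\le m$ gives total room $\sum_{i\le j}\bigl(e_i-(p-2)\bigr)=m-k(p-2)+D\ge D$. After reordering, $s'$ is an ordered $(n,k)$-sequence of sum $m$ whose last $k-j$ entries equal $p-2$, since at least $k-j$ of its entries equal $p-2$ and these are the smallest.

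Finally I would verify $f(s')\ge p$ via the monotonicity principle. Suppose toward a contradiction that $s'$ could be cut down to pieces all of size $\le p-1$. By the key observation this reduction uses only colors of budget $\ge p-1$, all of which are large colors with $f_i\le e_i$; the very same cuts with the same color assignment therefore fit within the larger budgets of $s$, so $s$ could also be reduced to pieces of size $\le p-1$, contradicting the reformulation of $f(s)=p$. Hence $s'$ cannot be so reduced, which at once shows that $s'$ is irreducible and that its lowest stopping point $q$ satisfies $q\ge p$, so $s'\in I^n_q(m)$ with $f_{j+1}=\dots=f_k=p-2$. The main obstacle is getting the reformulation and the ``only large colors are used'' observation exactly right — including the $m\ge\binom{n}{2}$ convention and the fact that the algorithm always cuts the largest remaining piece; once these are pinned down, the water-filling construction and the budget-domination comparison are routine.
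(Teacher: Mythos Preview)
Your proof is correct and follows the same approach as the paper: construct $s'$ by shifting mass from the large colors down to bring each small color up to exactly $p-2$, then observe that only the first $j$ colors (now with budgets $f_i\le e_i$) can ever serve as base colors when cutting a piece of size $\ge p$, so $s'$ cannot be decomposed past $K_p$ either. Your write-up is more explicit about the reformulation of the stopping point and about why the small colors are irrelevant (one quibble: the equivalence you state actually characterizes $f(s)\ge p$ rather than $f(s)=p$, though only that inequality is ever used), but the underlying argument is identical to the paper's.
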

\begin{proof}
Let $t$ be the smallest integer in $[j+1,k]$ such that $e_t<p-2$. If no such $t$ exists then set $s'=s$. Otherwise, because $k(p-2)\le m$, we can define an ordered sequence $s_1$ as follows. Increase $e_t$ by one and decrease some $e_i$ with $i\in [1,j]$ by one.  We can repeat these modifications to get $s_2,\dots,s_w$  such that all terms of $s_w$ with index at least $j+1$ are equal to $p-2$ and all terms with index at most $j$ are at least $p-2$. Set $s'=s_w$. Then algorithm \ref{alg} on $s'$ cannot use colors other than the first $j$ to decompose any $K_a$. Because $f_i\le e_i$ for $i\in [1,j]$ and the first $j$ colors of $s$ could not decompose $K_n$ past $K_p$, the first $j$ colors of $s'$ cannot decompose $K_n$ past $K_p$.
\end{proof}

\begin{corollary}\label{tails}
If there are no ordered $(n,k)$-sequences ending in a nonnegative number of $(p-2)$'s in any $I^n_q(m)$ for $q\ge p$, then $I^n_p(m)=\emptyset$.
\end{corollary}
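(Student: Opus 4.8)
The plan is to read this corollary as essentially the contrapositive of the lemma immediately preceding it, so almost all of the work is already done; the two things I would supply are a short reduction to ordered sequences and a careful matching of the lemma's output against the phrase ``ending in a nonnegative number of $(p-2)$'s''. First I would record the structural fact that makes the reduction legitimate: the lowest stopping point $f(s)$, and hence each class $I^n_q(m)$, depends only on the multiset of values $\{e_1,\dots,e_k\}$ and not on their order. This is because Step 3 of Algorithm \ref{alg} only ever tests inequalities $a_1a_2\le e_i$ and subtracts $a_1a_2$ from a chosen coordinate; permuting the colors permutes the coordinates but changes neither which cuts are available at any state nor the lowest stopping point. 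Consequently $I^n_q(m)$ is a union of permutation orbits, so it is empty if and only if it contains no ordered sequence.

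With this in hand I would argue by contradiction. Suppose $I^n_p(m)\neq\emptyset$; then by the previous remark there is an ordered sequence $s=(e_1,\dots,e_k)\in I^n_p(m)$, which I write as $e_1\ge\dots\ge e_j>p-2\ge e_{j+1}\ge\dots\ge e_k$, where $j$ is the number of terms exceeding $p-2$ (possibly $j=k$). The preceding lemma then supplies some $q\ge p$ and an ordered sequence $s'=(f_1,\dots,f_k)\in I^n_q(m)$ with $f_{j+1}=\dots=f_k=p-2$ and $f_1,\dots,f_j\ge p-2$. In ordered form $s'$ therefore consists of a (possibly empty) block of terms exceeding $p-2$ followed by a trailing block of terms equal to $p-2$; that is, $s'$ is an ordered sequence lying in some $I^n_q(m)$ with $q\ge p$ and ending in a nonnegative number of $(p-2)$'s. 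This contradicts the hypothesis, and therefore $I^n_p(m)=\emptyset$.

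The only point needing care is that the lemma being quoted carries the standing hypothesis $k(p-2)\le m$, which is exactly what licenses the coordinate-shifting used to build $s'$; I would confirm that this inequality holds in whatever regime the corollary is applied, or else dispatch the complementary case $k(p-2)>m$ directly. In that complementary case every ordered $(n,k)$-sequence of sum $m$ must have smallest term strictly below $p-2$ (otherwise all terms are $\ge p-2$ and the sum is at least $k(p-2)>m$), so one reads the ``nonnegative number'' clause as the honest count of trailing $(p-2)$'s, allowing zero, which is precisely the reading forced by the degenerate case $j=k$ where $s'$ has no trailing $(p-2)$'s at all. I expect this interpretive bookkeeping, aligning the ``$\ge 0$ copies of $p-2$'' clause with the lemma's output including the $j=k$ case, to be the only genuine obstacle; the logical core is just the contrapositive together with permutation invariance of $f$.
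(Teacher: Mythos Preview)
Your approach is correct and matches the paper's: the corollary is simply the contrapositive of the preceding lemma, and the paper gives no separate proof beyond a clarifying remark that ``zero $(p-2)$'s'' means all terms exceed $p-2$. Your added care about permutation invariance and the standing hypothesis $k(p-2)\le m$ is legitimate bookkeeping the paper glosses over; note that in every application (namely Lemma \ref{yeet}, where $n=2k$, $p<2k-1$, and $m\ge\binom{2k}{2}=k(2k-1)>k(p-2)$) this hypothesis is automatically satisfied, so the complementary case you worry about never actually arises.
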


When we say a nonnegative number of $(p-2)$'s, and this number is $0$, what we really mean is a sequence in which all of the terms are greater than $(p-2)$. However, phrasing it in this way will be useful in the next section.

\subsection{Tail Length Classes}

\begin{definition}
For an ordered $(n,k)$-sequence, $s=(e_1,\ldots,e_k)$ and a fixed value $r$,  define the tail length $l_r(s)$ to be the number of terms in the sequence less than or equal to $r$. For $0 \le l\le k$, Let the tail length class $L_{r}(l)$ be defined as
\[L_{r}(l)=\{s:l_r(s)=l\}\]
\end{definition}

Note that for any fixed $r$, the sets $\{L_r(l)\}$ partition the set of $(n,k)$-sequences.

\begin{lemma}\label{yeet}
Fix the integers $k>0$ and $p<2k-1$. If there is an $m$ such that $I_q^{2k}(m)=\emptyset$ for all $q\ge p+1$, then $I_{q'}^{2k}(m+x)=\emptyset$ for all $q'\ge p$ where $x=min(k-1,p-1)$.
\end{lemma}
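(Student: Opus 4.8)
The plan is to first restate the conclusion in the cleaner language of what a stopping point means, and then to separate the range $q'\ge p+1$ (handled by a soft monotonicity argument) from the single critical value $q'=p$ (the real content). The key reformulation I would use is that $f(s)\le t$ holds if and only if $K_{2k}$ can be cut-decomposed by $s$ into complete graphs each of size at most $t$: one direction is immediate from the definition of the lowest stopping point, and the other holds because once every component has size at most $t$ the algorithm has already reached a configuration witnessing $f(s)\le t$. I would also record the monotonicity fact that $f$ is weakly decreasing when any single term $e_i$ is increased, since enlarging a color's budget only enlarges the set of admissible cuts and hence the set of reachable decompositions, so the minimum stopping size cannot increase.

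With these in hand, the easy range is immediate. If some sequence $s'$ with $\sum f_i=m+x$ had $f(s')\ge p+1$, then deleting $x$ edges from its largest term (which exceeds $x$ because $\sum f_i\ge\binom{2k}{2}=k(2k-1)$) yields a $(2k,k)$-sequence $s$ of sum $m$ with $s\le s'$ entrywise, so by monotonicity $f(s)\ge f(s')\ge p+1$, contradicting $I_q^{2k}(m)=\emptyset$ for $q\ge p+1$. Hence $I_{q'}^{2k}(m+x)=\emptyset$ for every $q'\ge p+1$, and it remains only to prove $I_p^{2k}(m+x)=\emptyset$. For this I would invoke Corollary \ref{tails}: since the higher classes are already empty, it suffices to show that every ordered sum-$(m+x)$ sequence ending in a (possibly empty) block of $(p-2)$'s has $f\le p-1$, i.e.\ admits a cut-decomposition into parts of size at most $p-1$.

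So fix such a sequence $s'=(e_1,\dots,e_j,p-2,\dots,p-2)$ with $e_1\ge\cdots\ge e_j\ge p-1$ and exactly $k-j$ trailing $(p-2)$'s. The construction I would attempt is to remove $x$ edges from the large terms to form a sum-$m$ sequence $s$, apply the hypothesis to obtain a decomposition of $K_{2k}$ into parts of size at most $p$, and then spend the reclaimed budget to perform one extra cut $K_p\to K_{p-1}+K_1$, of cost $p-1$, on each part of size exactly $p$, producing a decomposition into parts of size at most $p-1$. The trailing colors of value $p-2$ are exactly one short of affording such a refining cut, so every refinement must be charged either to a large color's leftover budget or to the reclaimed $x$ edges.

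The main obstacle is precisely this final budget accounting, and it is where the value $x=\min(k-1,p-1)$ must be pinned down. A decomposition into parts of size at most $p$ may contain as many as $\lfloor 2k/p\rfloor$ parts of size exactly $p$, so naively refining each from a disjoint budget of $p-1$ would be far too expensive; the refinements must instead be consolidated onto one or a few large colors and paid for jointly. I expect to need a careful choice of both the companion sequence $s$ (concentrating the removed $x$ edges, and the resulting leftover, in a single large color) and, if necessary, the tail-length classes $L_{p-2}(l)$ introduced above to control the number of size-$p$ parts and the per-color residual. The two regimes $p\le k$ and $p>k$, in which $x$ equals $p-1$ and $k-1$ respectively, should correspond to the two ways this accounting can be tight, and verifying that $\min(k-1,p-1)$ reclaimed edges always suffice is the crux of the argument.
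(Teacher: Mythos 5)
Your handling of the range $q'\ge p+1$ matches the paper's Case 1 exactly (subtract $x$ from the largest entry, use entrywise monotonicity of decomposability), and your reduction of the critical case $q'=p$ to ordered sequences ending in a block of $(p-2)$'s via Corollary \ref{tails} is also the paper's first move. But from that point on what you have is a plan, not a proof: you yourself flag that ``verifying that $\min(k-1,p-1)$ reclaimed edges always suffice is the crux of the argument,'' and that crux is never discharged. Worse, the specific route you sketch --- delete $x$ edges to obtain a sum-$m$ sequence, apply the hypothesis to reach a configuration in which every part has size at most $p$, then pay $p-1$ out of the reclaimed budget for a refining cut $K_p\rightarrow K_{p-1}\cup K_1$ on each part of size exactly $p$ --- cannot work as stated, for precisely the reason you half-notice: when $p\le k$ the stuck configuration can contain up to $\lfloor 2k/p\rfloor$ parts of size exactly $p$, each refinement costs at least $p-1$ edges, and you reclaimed only $x\le p-1$ edges in total. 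The deficit is structural; no consolidation of the refinements onto one or a few colors changes the total cost, so the approach needs a genuinely different idea, not more careful bookkeeping.

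The paper closes the case $q'=p$ with two devices absent from your proposal, split by the tail length $l$ relative to $p-k$. When $l\le p-k-1$ (which forces $p\ge k+1$, so among $2k$ vertices at most one component of size $p$ can exist --- the multiplicity problem you worry about simply vanishes in this regime), it counts base colors: decomposing from $K_{2k}$ down to the stage where the largest part is $K_p$ removes $2k-p$ vertices and hence uses at most $2k-p$ colors as base colors, while $k-l\ge 2k-p+1$ entries exceed $p-2$, so some untouched color still has at least $p-1$ edges and can cut the $K_p$. When $l\ge p-k$, the paper does not delete edges at all; instead it transfers $p-1$ edges from $e_1$ onto the tail, raising every trailing $p-2$ to $p-1$, and verifies that the resulting sum $(m+x)-(p-1)+l$ is still at least $m$ --- checked separately in the regimes $p\ge k$ and $p<k$, which is exactly where $x=\min(k-1,p-1)$ gets pinned down, answering the question you left open. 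By Corollary \ref{bigbase}, the modified sequence's decomposition down to $K_p$ can use only the first $j$ colors (the tail entries $p-1$ are too small to serve as base colors on components of size above $p$), so replaying those same cuts on the original sequence $s$ leaves a surplus of $p-1$ edges in color $1$ with which to cut the $K_p$. In short: your framework and your easy case are sound, but the heart of the lemma --- the two-regime argument justifying the exact value of $x$ --- is missing, and the payment scheme you propose in its place would fail.
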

\begin{proof} Suppose $I_{q'}^{2k}(m+x)\ne \emptyset$ for some $q'\ge p$ where $x=min(k-1,p-1)$. Then there is an ordered sequence $s=(e_1,\ldots.,e_k)$ with sum at least $m+x$ decomposable to $K_{q'}$ but not any further.

\noindent
{\bf Case 1. $q'>p$}.

Because $x<2k-1$, $s'=(e_1-x,\ldots,e_k)$ is a sequence with $k$ terms and with sum at least $m$ and can thus be decomposed past $K_{q'}$. This contradicts the assumption of the lemma.

\noindent
{\bf Case 2. $q'=p$}. We know from Case 1 that $I^{2k}_{q'}(m+x)=\emptyset$ for all $q'>p$ so we can always decompose $(e_1,\ldots,e_k)$ at least until $K_{p}$. Applying Corollary \ref{tails} with $n=2k$ and $m+x$ as the sequences' sum, we have that $s\in L_{p-2}(l)$ for some $l$. We separate this case into two subcases.

\noindent
{\bf Subcase 2.1.} $s\in L_{p-2}(l)$ with $l\le p-k-1$.

 Each base color in a decomposition of $K_n$ removes at least a $K_1$ from it. Thus because $2k-p$ vertices are removed in the series of decompositions from $K_{2k}$ to $K_{p}$, a maximum of $2k-p$ colors could have been used as base colors to decompose to $K_{p}$. The total number of $e_i>p-2$ is
\[k-l\ge k-(p-k-1)=2k-p+1.\]
Thus there must be at least one $e_i$ with $e_i> p-2$ that has not been used as a base color. We have that this $e_i\ge p-1$ so we can use it to decompose the $K_{p}$, contradicting the assumption of the lemma.\\

\noindent
{\bf Subcase 2.2.} $s\in L_{p-2}(l)$ with $l\ge p-k$.

Write $s=(e_1,\ldots,e_j,p-2,\ldots,p-2)$ (where $j=k$ means no trailing $p-2$'s). Since the sum of $s$ is $m+ \min(k-1,p-1)$ and $p<2k-1$, we have $e_1>p-1$.  Consider the sequence $s'=(e_1-(p-1),e_2,..,e_j,p-1,\ldots, p-1)$  with $l$ $p-1$'s. The sum of $s'$ is
\[(m+x)-(p-1)+l.\]
If $p-k\ge 0$, then $k-1\le p-1$ and
\[(m+x)-(p-1)+l\ge (m+k-1)-(p-1)+(p-k)=m.\]
If $p-k<0$, then $p-1<k-1$ and because $l\ge 0,$
\[(m+x)-(p-1)+l\ge (m+p-1)-(p-1)+0\ge m.\]
Thus the sequence $s'$ will always have sum greater than or equal to $m$. By the given, it can be reduced down to $K_{p}$ and by Corollary \ref{bigbase} this can be done using only the first $j$ colors. Then for $s$  the same decomposition can be used to guarantee an $e_i\ge p-1$. This can then be used to decompose the $K_{p}$.
Thus no $(2k,k)$-sequence ending in a nonnegative number of $p-2$'s is in $I_{p}^{2k}(m+x)$ so by Corollary \ref{tails}, $I_{p}^{2k}(m+x)=\emptyset$.
\end{proof}

\begin{corollary}\label{moreedges}
If $s=(e_1,\ldots,e_k)$ is a $(2k,k)$-sequence with $e_1+\ldots+e_k= {2k\choose 2} + \frac{3k^2-7k+2}{2}$, then $s$ is $G$-colorable.
\end{corollary}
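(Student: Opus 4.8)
The plan is to bootstrap from Corollary~\ref{cor2k-1} by repeatedly invoking Lemma~\ref{yeet}, lowering the stopping-point threshold one unit at a time while accumulating the extra edges demanded at each step. Recall that a $(2k,k)$-sequence $s$ of sum $M$ is $G$-colorable precisely when it lies outside every irreducibility class $I^{2k}_q(M)$ with $q\ge 2$, i.e. when Algorithm~\ref{alg} never gets stuck on a $K_q$ with $q\ge 2$. So the goal is to show that $I^{2k}_q(M)=\emptyset$ for all $q\ge 3$ when $M=\binom{2k}{2}+\frac{3k^2-7k+2}{2}$, and then to deal with the final $K_2$ separately.

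First I would fix the base case. Corollary~\ref{cor2k-1} gives $I^{2k}_q\!\big(\binom{2k}{2}\big)=\emptyset$ for every $q\ge 2k-1$. Now I apply Lemma~\ref{yeet} successively with $p=2k-2,\,2k-3,\,\dots,\,3$. At the step with parameter $p$ the hypothesis that $I^{2k}_q(m)=\emptyset$ for all $q\ge p+1$ is exactly the conclusion produced by the previous step (or, for $p=2k-2$, the base case), so the applications chain cleanly: each one lowers the threshold from $q\ge p+1$ to $q\ge p$ at the cost of $x=\min(k-1,p-1)$ additional edges, and the sum output by one application is precisely the sum fed into the next. After the step with $p=3$ we obtain $I^{2k}_q(M)=\emptyset$ for all $q\ge 3$, where the total number of edges added to the base sum $\binom{2k}{2}$ is
\[
\sum_{p=3}^{2k-2}\min(k-1,\,p-1)=\sum_{p=3}^{k-1}(p-1)+\sum_{p=k}^{2k-2}(k-1)=\frac{3k^2-7k+2}{2},
\]
which is exactly the excess required in the statement.

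It remains to handle the lowest stopping point $q=2$. Suppose a branch of Algorithm~\ref{alg} reaches a configuration whose largest part is a $K_2$. That $K_2$ still contributes an uncolored internal edge, so the cuts performed so far have colored at most $\binom{2k}{2}-1$ edges, leaving a color budget of at least $M-\big(\binom{2k}{2}-1\big)=\frac{3k^2-7k+2}{2}+1>0$. Hence some color still has at least one unused edge, which is exactly the amount needed to cut a $K_2$ into two $K_1$'s, so the branch is not irreducible at $K_2$. Combined with $I^{2k}_q(M)=\emptyset$ for all $q\ge 3$, this shows the lowest stopping point of $s$ cannot be $2$ either, so the decomposition reaches singletons and $s$ is $G$-colorable.

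The main obstacle is bookkeeping rather than a deep idea: one must verify that the hypotheses of Lemma~\ref{yeet} are met at every stage of the chain (in particular that the sum appearing in each application matches the sum produced by its predecessor), evaluate the telescoping edge-count sum to land precisely on $\frac{3k^2-7k+2}{2}$, and correctly recognize why the iteration should terminate at $p=3$ rather than $p=2$ — the final $K_2$ step being forced by the strict surplus of edges rather than by emptiness of an irreducibility class. A minor additional point is to confirm that the $\min(k-1,p-1)$ splits are evaluated correctly, which requires $k$ to be at least a small constant; this is harmless for the asymptotic conclusion of Theorem~\ref{upperb}.
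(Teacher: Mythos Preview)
Your proposal is correct and follows essentially the same route as the paper: start from Corollary~\ref{cor2k-1}, iterate Lemma~\ref{yeet} down from $p=2k-2$ to $p=3$ accumulating $\min(k-1,p-1)$ extra edges at each step, and handle $p=2$ by the leftover-edge observation. The paper organizes the iteration into the two ranges $p\in[k,2k-2]$ and $p\in[3,k-1]$ and adds the contributions $(k-1)^2$ and $\binom{k-1}{2}-1$ separately, whereas you compute the single sum $\sum_{p=3}^{2k-2}\min(k-1,p-1)$ directly; the arithmetic and logic are the same.
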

\begin{proof}
\begin{itemize}
\item $p\ge 2k-1$. Corollary \ref{cor2k-1} with $n=2k,m={2k\choose 2}$ implies $I_{p}^{2k}({2k\choose 2})=\emptyset$.
\item $p\in [k,2k-2]$.  Lemma \ref{yeet} implies that \[\forall\: q\ge p+1, I_{q}^{2k}(m)=\emptyset \implies \forall\: q\ge p, I_{q}^{2k}(m+k-1)=\emptyset.\] Performing this for all values of $p$ in this range we get
\[m={2k\choose 2}+(k-1)^2 \implies \forall\: q\ge k, I_q^{2k}(m)=\emptyset.\]
\item $p\in [3,k-1]$.  Lemma \ref{yeet} gives that
\[\forall\:q\ge p+1, I_{q}^{2k}(m)=\emptyset  \implies  \forall\: q\ge p+1, I_q^{2k}(m+p-1)=\emptyset.\]
Performing this for all values of $p$ in this range gives that
\[m={2k\choose 2} + (k-1)^2+{k-1\choose 2}-1={2k\choose 2}+\frac{3k^2-7k+2}{2}\implies \forall\: q\ge 3, I_q^{2k}(m)=\emptyset.\]
\end{itemize}

We have that $I_2^{2k}(m)$ is empty because if there is an edge left, it can always be used to color a $K_2$.    Additionally, observing that the decomposition of Algorithm \ref{alg} ensures that nothing bigger than a $K_1$ is added to $S$ during the decomposition of $K_{2k}$, there are no extra components that may be too large to color.
\end{proof}

\subsection{G-coloring of $K_n$}

To prove Theorem \ref{upperb} we will use Lemma \ref{takeout} to decompose $K_n$ (for $n$ large enough) down to $K_{2k}$ in such a way that $\sum_{K_i\in S}{i\choose 2}\ge \frac{3k^2-7k+2}{2}$. By Corollary \ref{moreedges}, we can then color the $K_{2k}$. Letting $n=2k(n_0+1)$, for some $n_0$, we will remove as many $K_{n_0}$'s as possible from the $K_a$'s with $a\in [2kn_0,2k(n_0+1)]$ which we can do by Lemma \ref{takeout}. We will then remove as many $K_{n_0-1}$'s as possible from the $K_a$'s with $a\in [2k(n_0-1),2kn_0]$. Each time we remove a $K_j$ from $K_a$, the next largest graph in $S$ will be $K_{a-j}$. To find a lower bound on the number of $K_j$ we can remove from the $K_a$ with $a\in [2kj,2k(j+1)]$, we should assume we have removed a $K_{j+1}$ from $K_{2k(j+1)}$ and remove as many $K_j$'s as we can from the $K_a$ with $a\in [2kj,2k(j+1)-j-1]$, removing one from $K_{2kj},K_{2kj+j},...,K_{2kj+yj}$ where $yj\le 2k-j-1$. Solving this yields
\[y=\left\lfloor \frac{2k-j-1}{j}\right\rfloor.\]
So, the number of $K_j$'s that we will remove is $\left \lfloor \frac{2k-1}{j}\right\rfloor$.
Thus we want our $n_0$ to satisfy
\[\sum_{i=2}^{n_0}\left\lfloor\frac{2k-1}{i}\right\rfloor{i\choose 2}>\frac{3k^2-7k+2}{2}\]
All that is left to show is that we can color the remaining small components in $S$ after $K_{2k}$ is decomposed and that we can successfully determine the value of $n_0$. To do the first part, we will show that when trying to decompose a $K_j\in S$, there is always a color with greater than or equal to $j-1$ edges. For $j\ge 3$, when attempting to color $K_j$, we will not have colored any of the $K_{j-1}$. Thus the number of edges we will have left to color $K_j$ will be greater than
\[\left\lfloor\frac{2k-1}{j-1}\right\rfloor{j-1\choose 2}+{j\choose 2}\ge \left(\frac{2k-(j-1)}{j-1}\right){j-1\choose 2}+{j\choose 2}\]\[=\frac{(2k-(j-1))(j-2)+j(j-1)}{2} = k(j-2)+j-1>k(j-2)\]
Thus a color will always have enough edges to decompose $K_j$ for every $K_j\in S$ with $j\ge 3$. For $j=2$, if there are still edges, $K_j$ can be decomposed. Thus every element in $S$ is decomposable and our coloring is complete.

To calculate $n_0$, we have that
\[\sum_{i=2}^{n_0}\left\lfloor\frac{2k-1}{i}\right\rfloor{i\choose 2}\ge \sum_{i=2}^{n_0}\frac{(2k-i)(i-1)}{2}\]
If the right side is bigger than $\frac{3k^2-7k+2}{2}$, then $n_0$ will be as large as we need it to be. Thus we would like the smallest $n_0$ that makes the expression below positive.
\[2\left(\sum_{i=2}^{n_0}\frac{(2k-i)(i-1)}{2}-\frac{3k^2-7k+2}{2}\right)=\sum_{i=2}^{n_0}(-i^2+(2k+1)i-2k) -(3k^2-7k+2)\]
\[=-\left(\frac{n_0(n_0+1)(2n_0+1)}{6}-1\right)+(2k+1)\left(\frac{n_0(n_0+1)}{2}-1\right)-2k(n_0-1)-3k^2+7k-2\]
\begin{equation}\label{eq2.4}
=-\frac{n_0^3}{3}+kn_0^2+\left(\frac{1}{3}-k\right)n_0-3k^2+7k-2
\end{equation}
We would like (\ref{eq2.4}) to be positive.  For $n_0=2\sqrt{k}$ (\ref{eq2.4}) becomes
\[k^2-\frac{14}{3}{k\sqrt k}+7k+\frac{2\sqrt k}{3} -2.\]
By AM-GM,
\[k^2+6k\ge 2k\sqrt{6k}> \frac{14}{3}k\sqrt k.\]
Thus for $k\geq 2$,
\[k^2-\frac{14}{3}{k\sqrt k}+7k+\frac{2\sqrt k}{3} -2\ge k+ \frac{2\sqrt k}{3} -2> 0.\]

Thus for $n_0=2\sqrt{k}$, (\ref{eq2.4}) is positive and for $n_0>\sqrt{3k}$, the coefficient of the highest degree term in (\ref{eq2.4}), $kn_0^2-3k^2$, is positive.
We have that as $k$ gets large, $n_0$ approaching $\sqrt{3k}$ (thus $n=2k(n_0+1)$ approaching $2\sqrt{3}k^{3/2}$) will make (\ref{eq2.4}) positive, proving Theorem \ref{upperb}.

\newpage

\section{Proof of the Lower Bound}\label{lb}

We first prove an important lemma used throughout this section. It provides a lower bound on the size of the largest component in a decomposition step.
\begin{lemma}\label{boundcomp}
If the largest component in a partition of $K_n$ into disjoint complete graphs has size at most $j$ with $n>j>\frac{n}{2}$, then there are at most $\binom{j}{2}+\binom{n-j}{2}$ internal edges within the components of this partition.
\end{lemma}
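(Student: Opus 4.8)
The plan is to reduce the geometric statement to a numerical inequality about the part sizes and then settle it by a convexity (smoothing) argument. Write the partition as vertex-disjoint complete graphs $K_{n_1},\dots,K_{n_m}$ with $n_1+\dots+n_m=n$ and each $n_i\le j$. The number of internal edges equals
\[
\sum_{i=1}^m\binom{n_i}{2}=\frac{1}{2}\Bigl(\sum_{i=1}^m n_i^2-n\Bigr),
\]
and since $\binom{j}{2}+\binom{n-j}{2}=\tfrac12\bigl(j^2+(n-j)^2-n\bigr)$, the assertion is equivalent to
\[
\sum_{i=1}^m n_i^2\le j^2+(n-j)^2 .
\]
So it suffices to prove that, among all ways to write $n$ as a sum of positive integers each at most $j$, the sum of squares is largest for the two-part partition $(j,\,n-j)$; note this partition is admissible exactly because $j>\tfrac n2$ forces $n-j<j$.

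For the maximization I would run a smoothing argument that exploits the convexity of $x\mapsto x^2$. The basic exchange step is: if two parts have sizes $a\ge b\ge 1$ with $a<j$, then replacing them by $a+1$ and $b-1$ keeps every part $\le j$ and strictly increases the sum of squares, because
\[
(a+1)^2+(b-1)^2-a^2-b^2=2(a-b)+2>0 .
\]
Starting from an arbitrary partition and repeatedly moving one unit into a largest part whose size is still below $j$, the sum of squares strictly increases while remaining bounded above (by $n^2$, say), so the process terminates after finitely many steps, and the terminal partition is a maximizer.

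The final step is to identify this terminal configuration, which is where the hypothesis $j>\tfrac n2$ does the real work and which I expect to be the main obstacle to state cleanly. At a partition admitting no exchange step the largest part must equal $j$ (otherwise the two largest parts form a legal exchange), and because $2j>n$ no second part can also equal $j$; moreover, once one part equals $j$, any two of the remaining parts (which sum to $n-j<j$, so the larger of any two is below $j$) would themselves admit an exchange, forcing at most one other part, necessarily of size $n-j\ge 1$. Hence the unique terminal configuration is $(j,\,n-j)$, giving $\sum n_i^2\le j^2+(n-j)^2$ as required. A more computational alternative that avoids the termination bookkeeping is to split on the size $a=n_1$ of the largest part: when $a\ge n-j$ one bounds $\sum n_i^2\le a^2+(n-a)^2\le j^2+(n-j)^2$ using $\sum_{i\ge2}n_i^2\le(\sum_{i\ge2}n_i)^2$ together with the convexity of $x^2+(n-x)^2$ on $[n-j,j]$, whereas when $a<n-j$ one instead uses $\sum n_i^2\le a\sum n_i=an<(n-j)n$, and $(n-j)n<j^2+(n-j)^2$ precisely because $j>\tfrac n2$. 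Either way, the genuinely subtle point is this second ``many small parts'' regime, where the naive bound $a^2+(n-a)^2$ overshoots the target and one must switch to the linear estimate $\sum n_i^2\le an$.
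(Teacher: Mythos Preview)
Your proof is correct and your main argument is essentially the same smoothing/exchange argument the paper uses: take a maximizer (or iterate exchanges to a terminal configuration), use the convexity identity $(a+1)^2+(b-1)^2>a^2+b^2$ to force the largest part up to $j$, and then use $j>n/2$ to force exactly one remaining part of size $n-j$. The paper works directly with $\binom{x_i}{2}$ rather than passing to $\sum x_i^2$, and phrases it as ``assume a maximizer and derive $x_1=j$, $x_2=n-j$'' instead of ``iterate until no exchange is possible,'' but these are cosmetic differences. Your case-splitting alternative (bounding $\sum n_i^2$ by $a^2+(n-a)^2$ when $a\ge n-j$ and by $an$ when $a<n-j$) is a genuinely different, iteration-free route that the paper does not take; it is slick and makes transparent exactly where $j>n/2$ is needed.
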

\begin{proof}
Let the $K_{x_1}\cup\cdots\cup K_{x_m}$ be a partition of $K_n$ with the largest amount of internal edges. Without loss of generality assume $x_1\geq\cdots\geq x_m$. If $x_1<j$, then $m>1$ and $K_{x_1+1}\cup\cdots\cup K_{x_m-1}$, where we discard the last part if $x_m=1$, is also a partition of $K_n$. We have that
$$\binom{x_1+1}{2}+\binom{x_m-1}{2}-\binom{x_1}{2}-\binom{x_m}{2}=x_1-x_m+1>0,$$
so the new partition has more internal edges than the original one, which is a contradiction. Therefore $x_1=j$.

Similar to above, if $x_2<n-j$, then $m>2$, and $K_{x_1}\cup K_{x_2+1}\cdots\cup K_{x_m-1}$, where we discard the last part if $x_m=1$, is a partition of $K_n$ with more internal edges, contradiction. So $x_2=n-j, m=2$ and the maximum number of internal edges is indeed $\binom{j}{2}+\binom{n-j}{2}$.
\end{proof}

We now prove Theorem \ref{lowerb} by constructing a family of color sequences that are not G-sequences. Every color sequence we construct will have roughly half of the entries being large and the other half being small. By Corollary \ref{bigbase}, if this sequence is a G-sequence, then we can decompose the complete graph down to a certain stage without using colors corresponding to the small entries as base colors.

On the other hand, we show that the number of edges from colors corresponding to large entries are actually not enough for us to get to that stage. At each intermediate stage we have a lower bound for the number of internal edges in any possible decomposition. Lemma \ref{boundcomp} then gives a lower bound on the size of the largest component in any possible decomposition. We then use this information to count the minimum number of edges needed to decompose down to that desired stage and reach a contradiction.

\begin{proof}[\bf{Proof of Theorem \ref{lowerb}}]The proof consists of four steps.\\

\noindent{\bf Step 1: Constructing the sequence}

For every fixed integer $k\geq2$, let $f(k)=\floor{\frac{\alpha k^{1.5}}{\ln k}}$, where $\alpha$ is a constant to be chosen later. For simplicity, we shall write $f$ instead of $f(k)$ when there is no confusion. We want to construct an $(f,k)$-sequence $(e_1,\cdots,e_k)$ of the form $e_1=\cdots=e_{c}=a+1$, $e_{c+1}=\cdots=e_{\ceil{\frac{k}{2}}}=a$ and $e_{\ceil{\frac{k}{2}}+1}=\cdots=e_{k}=b$, such that $a>b$ and both are carefully chosen to make the ensuing calculation work. A suitable choice is $$b=3\ceil{\frac{f}{\sqrt{k}}}, a=\floor{\frac{\binom{f}{2}-\floor{\frac{k}{2}}b}{\ceil{\frac{k}{2}}}}, c=\binom{f}{2}-b\floor{\frac{k}{2}}-a\ceil{\frac{k}{2}}.$$ 
One can verify that $c<\ceil{\frac{k}{2}}$ and $c(a+1)+(\ceil{\frac{k}{2}}-c)a+\floor{\frac{k}{2}}b=c+a\ceil{\frac{k}{2}}+b\floor{\frac{k}{2}}=\binom{f}{2}$ so this is a well-defined $(f,k)$-sequence. One estimate that will be useful later is that $$a\leq\frac{\binom{f}{2}-\floor{\frac{k}{2}}b}{\ceil{\frac{k}{2}}}\leq\frac{f^2-f-(k-1)b}{k}<\frac{f^2}{k}=\frac{1}{9}(\frac{3f}{\sqrt{k}})^2\leq\frac{b^2}{9}.$$

We will show that this is not a G-sequence for suitable choices of $\alpha$. If this is a G-sequence, then by repeated applications of Theorem \ref{decomp} we can decompose $\{K_f\}$ down to $\{K_1,\cdots,K_1\}$. Moreover, by Corollary \ref{bigbase}, when decomposing $K_x$ with $x\geq b+2$, we can arrange so that the base colors have have at least $x-1\geq b+1$ edges. This implies that we can decompose $K_f$ to the stage where all components have size less than $b+2$ only using the first $\ceil{\frac{k}{2}}$ colors as base colors.\\

\noindent{\bf Step 2: Bounding the size of the largest component}

To bound the size of the largest component in each intermediate decomposition step, we will prove the following inequality for sufficiently large $k$ and all $b+2\leq x\leq f$:
\begin{equation}\label{intedgecond}
\binom{x}{2}-2(a+1)>\binom{x-h(x)}{2}+\binom{h(x)}{2},
\end{equation}
where $h(x)=\ceil{\frac{3(a+1)}{x}}$. 

Indeed, (\ref{intedgecond}) is equivalent to:
\begin{equation}\label{hcond}
x\cdot h(x)>2(a+1)+h(x)^2.
\end{equation}

Since $\frac{2}{3}x\cdot h(x)\geq 2(a+1)$, to show that (\ref{hcond}) holds, it suffices to show that $\frac{1}{3}x\cdot h(x)>h(x)^2$, i.e. $x>3\cdot h(x)$ for all $b+1<x\leq f$. Because $h(x)$ decreases as $x$ increases, it suffices to show this for $x=b+2$. We have that for all sufficiently large $k$ and thus sufficiently large $b$:
$$(b+2)^2>b^2+3b+24>9a+3b+24=9(a+2)+3(b+2)$$
$$\implies b+2>3(\frac{3(a+2)}{b+2}+1)>3\ceil{\frac{3(a+2)}{b+2}}=3\cdot h(b+2).$$

Therefore (\ref{hcond}) holds and so (\ref{intedgecond}) is proved.

Because there are at most two base colors, each with at most $a+1$ edges, the left side of (\ref{intedgecond}) is a lower bound on the number of internal edges for any decomposition of $K_x$. By Lemma \ref{boundcomp} and the fact that $x>3\cdot h(x)\implies x-h(x)>\frac{x}{2}$, the right side of (\ref{intedgecond}) is an upper bound on the number of internal edges for decompositions of $K_x$ if the largest component have size at most $x-h(x)$. Thus inequality (\ref{intedgecond}) implies that the largest component in any decomposition of $K_x$ must have size at least $x-h(x)+1$.\\

\noindent{\bf Step 3: Counting number of edges needed from the first $\ceil{\frac{k}{2}}$ colors}

We now count the minimum number of edges needed from the first $\ceil{\frac{k}{2}}$ colors to get to the desired stage where all components have size less than $b+2$.

For any such decomposition steps down to the desired stage, we can find the following intermediate decomposition steps $K_{x_0}\rightarrow K_{x_1}\rightarrow\cdots\rightarrow K_{x_j}$, where $f=x_0>x_1>\cdots>x_{j-1}\ge b+2>x_j$ and $K_{x_{r+1}}$ is the largest component in the decomposition of $K_{x_{r}}$ for all $0\leq r\leq j-1$. For all $0\le r\le j-1$, we have that $x_{r+1}\geq x_r-h(x_r)+1>x_r-h(x_r)$ by Step 2 and for any vertex in $K_{x_r}$ but not in the component $K_{x_{r+1}}$, all of its $x_{r+1}$ edges to $K_{x_{r+1}}$ are colored with base colors. Let $t_r=x_r-x_{r+1}$, then at least $t_rx_{r+1}>t_r(x_r-h(x_r))$ edges from the first $\ceil{\frac{k}{2}}$ colors are needed to decompose $K_{x_r}$. Therefore at least
\begin{align*}
\sum_{r=0}^{j-1}t_r(x_r-h(x_r))&=\sum_{r=0}^{j-1}\sum_{i=0}^{t_r-1}(x_r-h(x_r))\\
&>\sum_{r=0}^{j-1}\sum_{i=0}^{t_r-1}(x_r-i-h(x_r-i))\\
&=\sum_{x=x_j+1}^{x_0}(x-h(x))\geq \sum_{x=b+2}^{f}(x-h(x))
\end{align*}
edges from the first $\ceil{\frac{k}{2}}$ colors are needed to decompose $K_f$ into components of size less than $b+2$.

\begingroup
\allowdisplaybreaks
Further calculation shows that at least
\begin{align*}
\sum_{x=b+2}^{f}(x-h(x))&=\sum_{x=b+2}^{f}x-\sum_{x=b+2}^{f}\ceil{\frac{3(a+1)}{x}}\\
&>\frac{1}{2}(f+b+2)(f-b-1)-3(a+1)\sum_{x=b+2}^{f}\frac{1}{x}-\sum_{x=b+2}^{f}1\\
&>\frac{f^2-f-b^2-b}{2}-3(a+1)\ln\frac{f}{b}\\
&\geq\frac{f^2-f-b^2-b}{2}-3(a+1)\ln\frac{f\sqrt{k}}{3f}\\
&=\frac{f^2-f-b^2-b}{2}-3(a+1)\ln\frac{\sqrt{k}}{3}\\
&>\frac{f^2-f-b^2-b}{2}-\frac{3}{2}(a+1)\ln k
\end{align*} edges from the first $\ceil{\frac{k}{2}}$ colors are needed.
\endgroup

However, there are only
$$\binom{f}{2}-\floor{\frac{k}{2}}b\leq\frac{f^2-f}{2}-\frac{(k-1)b}{2}$$
edges in total from the first $\ceil{\frac{k}{2}}$ colors. \\

\noindent{\bf Step 4: Choosing constant $\alpha$ to reach a contradiction}

So if we have
$$\frac{f^2-f-b^2-b}{2}-\frac{3}{2}(a+1)\ln k>\frac{f^2-f}{2}-\frac{(k-1)b}{2}$$
\begin{equation}\label{contracond}
\iff(k-2)b>3(a+1)\ln k+b^2,
\end{equation}
then there is a contradiction because there are not enough edges from the first $\ceil{\frac{k}{2}}$ colors to decompose down to the desired stage. 

Recall that $a<\frac{b^2}{9}$. Fix a small $0<\epsilon<1$ so that the right side of (\ref{contracond}) is less than
$$(3+\epsilon)a\ln k+b^2<b^2(\frac{3+\epsilon}{9}\ln k+1)<(3+\frac{3f}{\sqrt{k}})^2\frac{(3+2\epsilon)\ln k}{9}$$
$$<\frac{(3+2\epsilon)(1+\epsilon)f^2\ln k}{k}<\frac{(3+7\epsilon)f^2\ln k}{k}$$
for all sufficiently large $k$. The left side of (\ref{contracond}) is at least $$(k-2)\frac{3f}{\sqrt{k}}>(3-\epsilon)\sqrt{k}f$$ for all sufficiently large $k$.
If $\alpha<\frac{3-\epsilon}{3+7\epsilon}$, then $$f(k)=\floor{\frac{\alpha k^{1.5}}{\ln k}}\implies (3-\epsilon)\sqrt{k}f\geq\frac{(3+7\epsilon)f^2\ln k}{k},$$ so (\ref{contracond}) holds for all sufficiently large $k$ and the original sequence is not a G-sequence. So we have $g(k)>f(k)\implies g(k)>\frac{\alpha k^{1.5}}{\ln k}$ for all sufficiently large $k$. Note that $\lim_{\epsilon\rightarrow0}\frac{3-\epsilon}{3+7\epsilon}=1$, so $\alpha(k)\rightarrow1$ as $k\rightarrow\infty$. This completes the proof.
\end{proof}

\newpage

\section{Proof of Theorem \ref{g5}}\label{sectg5}

The proof of Theorem \ref{g5} is based on some lemmas. 
\begin{lemma}\label{spec}There are four $(6,4)$-sequences that are not G-sequences: $(7,4,2,2),(7,3,3,2)$, $(6,3,3,3)$ and $(4,4,4,3)$. The only $(7,4)$-sequence that is not a G-sequence is $(9,4,4,4)$.
\end{lemma}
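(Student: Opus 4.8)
The plan is to dispose of the easy cases first and then split each statement into a \emph{negative} part (the listed sequences are not G-sequences) and a \emph{positive} part (all other sequences are). If any entry of an ordered sequence equals $0$ it is genuinely a $3$-colour sequence, and since $g(3)=5\le 6,7$, every such $(6,4)$- or $(7,4)$-sequence is automatically a G-sequence by the result quoted from \cite{GyPPW}; so I would restrict attention to sequences with $e_4\ge 1$. There are only $27$ positive ordered $(6,4)$-sequences and a comparable number of $(7,4)$-sequences, so the whole statement is a finite verification and the real work is organising it.

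For the negative part I would run a forced-decomposition argument driven by Corollary \ref{bigbase}: in any G-colouring with at least three colours there is a decomposition in which every base colour spans a connected subgraph, hence uses at least $n-1$ edges. On $K_6$ a base colour must therefore have at least $5$ edges, so for $(4,4,4,3)$ no colour can serve as a base colour and we are done at once. For $(7,4,2,2),(7,3,3,2)$ and $(6,3,3,3)$ only the first colour qualifies, so there is a \emph{single} base colour; bounding the crossing edges by $e_1$ forces the partition to be $K_5\cup K_1$, and the residual colouring inside the $K_5$ must realise $(4,2,2,2)$, $(3,3,2,2)$ and $(3,3,3,1)$ respectively. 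The last two have largest entry $<4=5-1$, so Corollary \ref{bigbase} rules them out on $K_5$; for $(4,2,2,2)$ the same reasoning forces a $K_4\cup K_1$ split leaving $(2,2,2)$ inside $K_4$, which again has no base colour. Finally, for $(9,4,4,4)$ on $K_7$ the same bound forces the split $K_6\cup K_1$, whose residual is exactly the non-realisable $(4,4,4,3)$ on $K_6$; this chaining of the forced decompositions is what makes the negative direction short.

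For the positive part the engine is again $g(3)=5$, i.e. that any $3$-colour sequence on a $K_m$ with $m\ge 5$ is a G-sequence. It therefore suffices to reduce a given $4$-colour sequence, by a single well-chosen cut (Corollary \ref{bigbase}), either to a $3$-colour sequence on some $K_{\ge 5}$ or to an already-verified smaller $4$-sequence. For $K_7$ I would peel off one vertex with the largest colour, which Lemma \ref{takeout} guarantees has at least $6$ edges when $n=2k-1=7$, landing on a $(6,4)$-sequence; whenever this happens to hit one of the four bad $(6,4)$-sequences I instead use a $K_5\cup K_2$ cut (available since then $e_1\ge 10$) or peel a vertex with the \emph{second} colour, either of which exhausts a colour and drops to a $3$-colour sequence on $K_5$. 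On $K_6$, every sequence with $e_1=5$ reduces by a $K_5\cup K_1$ cut to a $(5,3)$-sequence and is hence a G-sequence; a dominant top colour is handled by a $K_3\cup K_3$, $K_4\cup K_2$ or matching-type construction that paints the three small colours so that no triangle meets three of them; and the few stubborn cases are reduced by a $K_3\cup K_2$ cut together with a careful internal assignment (for instance leaving a repeated colour inside a $K_3$) that avoids a rainbow triangle.

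I expect the main obstacle to be precisely this positive direction. The naive cuts repeatedly land on non-realisable residues: besides the sequences in the statement one meets $(2,2,2)$, $(3,1,1,1)$ and $(2,2,1,1)$ on $K_4$, and $(3,3,2,2)$, $(3,3,3,1)$ on $K_5$ as dead ends, so each cut must be chosen with the realisability of its target in mind rather than greedily (for example $(9,2,2,2)$ fails under the $K_5\cup K_1$, $K_4\cup K_2$ cuts but succeeds under $K_3\cup K_3$). The bookkeeping is finite but delicate; the cleanest write-up likely first records, again via Corollary \ref{bigbase}, exactly which small $(4,\cdot)$- and $(5,\cdot)$-sequences fail, and possibly establishes a monotonicity observation that shifting edges onto larger colours tends to preserve G-colourability, so as to reduce the number of explicit constructions to a short list.
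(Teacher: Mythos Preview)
Your overall strategy matches the paper's: a finite case analysis driven by cuts and by $g(3)=5$. Your treatment of the \emph{negative} direction is in fact cleaner than the paper's, which only flags $(6,3,3,3)$, $(7,3,3,2)$, $(7,4,2,2)$ as ``exceptions'' during the case split without explicitly arguing non-realisability; your forced-decomposition argument via Corollary~\ref{bigbase} does that job correctly.

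The \emph{positive} part, however, contains concrete bookkeeping errors. Your parenthetical ``available since then $e_1\ge 10$'' is false: peeling a vertex with the largest colour from $(8,7,4,2)$, $(8,7,3,3)$, $(9,7,3,2)$, $(9,6,3,3)$ all land on bad $(6,4)$-residues, yet each has $e_1<10$, so no $K_5\cup K_2$ cut is available. Your fallback ``peel a vertex with the second colour'' does in fact rescue these (e.g.\ $(8,7,4,2)\to(8,1,4,2)$ on $K_6$, which is good), but it lands on $K_6$, not on a ``$3$-colour sequence on $K_5$'' as you wrote, and it need not exhaust a colour. So the sentence ``either of which exhausts a colour and drops to a $3$-colour sequence on $K_5$'' is wrong on both counts and must be rewritten as two separate cases. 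The paper handles this range differently: for $e_1\in\{10,11\}$ it uses $K_2\cup K_5$, for $e_1\ge 12$ it uses $K_3\cup K_4$, and for $e_1\in\{7,8,9\}$ it peels a vertex and then deals with the five suspicious $(7,4)$-sequences individually (four are fine, and $(9,4,4,4)$ is the genuine exception).

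For $K_6$ your sketch is also too loose. ``Every sequence with $e_1=5$'' reduces nicely, but your ``dominant top colour is handled by $K_3\cup K_3$, $K_4\cup K_2$ or a matching-type construction'' hides exactly the delicate cases; for example $(10,3,1,1)$ fails under $K_3\cup K_3$ (residue $(3,1,1,1)$ on two $K_3$'s cannot be split) and instead needs the cut $K_4\cup K_1\cup K_1$, as the paper notes. The monotonicity heuristic you mention (``shifting edges onto larger colours tends to preserve G-colourability'') is not true in general and should not be relied upon. In short, the plan is right, but the execution of the positive direction needs an honest case table rather than the two summary sentences you have now.
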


\begin{proof}
To prove the first part, let $s=(e_1,e_2,e_3,e_4)$ be a $(6,4)$ sequence.
\begin{itemize}
\item $e_1\le 4$.  We have the only sequence is $s=(4,4,4,3)$. This is not a G-sequence as it violates Corollary \ref{bigbase}.
\item $e_i=5$ for some $i$.  Then with the star $K_{1,5}$ we can reduce the problem to a $(5,3)$-sequence and it is a G-sequence since $g(3)=5$.
\item $e_1=6$. With the star $K_{1,5}$ we can reduce the problem to the $(5,4)$-sequence obtained by adding $e'=1$ to the sequence $(e_2,e_3,e_4)$. Three of the four possibilities are G-sequences (one uses a $K_3\cup K_2$ decomposition), the exception is $(6,3,3,3)$.
\item $e_1=7$. With the star $K_{1,5}$ we can reduce the problem to the $(5,4)$-sequence. Three of the five possibilities are G-sequences (two use $K_1\cup K_4$ decomposition), the exceptions are $(7,3,3,2)$ and $(7,4,2,2)$.
\item $e_1=8$. $K_2\cup K_4$ can be used.
\item $e_1=9$. $K_3\cup K_3$ can be used.
\item $e_1=10$. $K_3\cup K_3$ can be used except that $(10,3,1,1)$ uses $K_4\cup K_1\cup K_1$ and gets reduced to  $(2,2,1,1)$ on $K_4$.
\item $e_1=11$. $K_3\cup K_3$ can be used.
\item $e_1=12$. $K_2\cup K_2\cup K_2$ can be used.
\end{itemize}

To prove the second part, let $s=(e_1,e_2,e_3,e_4)$ be a $(7,4)$-sequence.
\begin{itemize}
\item $e_1=6$. With the star $K_{1,6}$ we can reduce $(7,4)$-sequences into $(6,3)$-sequences, which are always G-sequences since $g(3)=5$.
\item $e_1=7, 8,$ or $9$. With the star $K_{1,6}$ we can reduce $(7,4)$-sequences into $(6,4)$-sequences. We know that there are four $(6,4)$-sequences that are not G-sequences, and in this case there are in total five $(7,4)$-sequences which can become one of those non G-sequences on $K_6$. Four of the five possibilities are G-sequences (one uses $K_4\cup K_2\cup K_1$ decomposition), the exception is $(9,4,4,4)$.
\item $e_1=10,11$. $K_2\cup K_5$ can be used.
\item $e_1\ge 12$. $K_3\cup K_4$ can be used.\qedhere
\end{itemize}
\end{proof}

\begin{lemma}\label{big}
For any positive $j$ and let $k=2j-1,n=2g(j)$, any $(n,k)$-sequence with $e_1\ge g(j)^2$ is a G-sequence.
\end{lemma}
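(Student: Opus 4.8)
Write $C=\binom{g(j)}{2}$ and recall the setup $k=2j-1$, $n=2g(j)$, with an ordered $(n,k)$-sequence $e_1\ge\cdots\ge e_k$ satisfying $e_1\ge g(j)^2$. The plan is to use color $1$ as a single base color to cut $K_n=K_{2g(j)}$ into two cliques of size $g(j)$, and then to distribute the remaining edge budget so that each $K_{g(j)}$ receives a color sequence of length at most $j$. By the defining property of $g(j)$, each such $K_{g(j)}$ is G-colorable, and gluing the two G-colorings along the monochromatic cut yields a G-coloring of $K_n$: every triangle either lies inside one clique (no rainbow triangle, as that clique is Gallai-colored) or has two of its edges inside the cut (hence two edges of color $1$), so no rainbow triangle is created.

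Concretely, I would partition the vertex set into two halves of size $g(j)$ and color all $g(j)^2$ edges of the resulting cut with color $1$, which is possible exactly because $e_1\ge g(j)^2$. Since $\binom{n}{2}-g(j)^2=2C$, the internal edges of the two cliques must be colored using the leftover amounts $e_1-g(j)^2,\,e_2,\dots,e_k$, which form $2j-1$ nonnegative integers summing to $2C$. The color counts then come out exactly right, color $1$ contributing $g(j)^2$ from the cut together with whatever of $e_1-g(j)^2$ is used internally. So the whole statement reduces to the following balancing claim: \emph{any $2j-1$ nonnegative integers $v_1\ge\cdots\ge v_{2j-1}$ with $\sum v_i=2C$ can be split into two groups, each summing to exactly $C$ and each using at most $j$ of the colors, where one color may be divided between the two groups.}

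To prove the claim I would always designate $v_1$ as the color to be split, and then look for a $(j-1)$-element subset $A\subseteq\{v_2,\dots,v_{2j-1}\}$ whose sum lies in the window $[C-v_1,\,C]$. Assigning $A$ to group one and its complement $B$ (also of size $j-1$) to group two, and topping each group up with the appropriate part of $v_1$, gives two groups summing to exactly $C$ and using $j$ colors each; one checks that the two parts of $v_1$ are nonnegative and add to $v_1$ precisely when $\mathrm{sum}(A)\in[C-v_1,C]$. The window has width $v_1$, so the existence of $A$ is an intermediate-value argument. First, $v_1+\cdots+v_j\ge C$ always holds: otherwise each of the bottom $j-1$ values would be at most the average $\tfrac{1}{j}(v_1+\cdots+v_j)<C/j$, forcing their total below $C$, contradicting that this total equals $2C-(v_1+\cdots+v_j)>C$. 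Consequently the bottom $j-1$ values sum to at most $C$. Now sliding a window of $j-1$ consecutive values from the top (sum $v_2+\cdots+v_j\ge C-v_1$) down to the bottom (sum $v_{j+1}+\cdots+v_{2j-1}\le C$) changes the sum monotonically in steps of size at most $v_1$, so some window lands inside $[C-v_1,C]$.

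I expect the balancing claim, and specifically the control of the color count, to be the main obstacle. We have exactly $2j-1$ colors to place into two boxes of capacity $j$, so precisely one color may be split and the remaining $2j-2$ must be divided evenly as $(j-1,j-1)$ (here the parity of $k=2j-1$ is used). A naive greedy ``largest-first'' filling of one clique can fail, for instance when $e_1-g(j)^2$ by itself exceeds $C$, or when the top $j-1$ leftover values already overflow a single clique; committing to splitting $v_1$ and using the sliding-window argument avoids this. The remaining points---integrality of the split amounts, the harmlessness of zero entries (a shorter sequence is padded with zeros to length $j$), and the verification that each glued coloring indeed realizes the prescribed $e_i$---are routine.
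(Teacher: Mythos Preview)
Your proof is correct and follows the same overall architecture as the paper: cut $K_{2g(j)}$ into two copies of $K_{g(j)}$ using color~$1$, split the leftover amount of color~$1$ between the two cliques, and distribute the remaining $2j-2$ colors as $(j-1)+(j-1)$ so that each clique sees at most $j$ colors and can be G-colored by the definition of $g(j)$.

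The difference is entirely in the balancing step. You designate $v_1$ as the color to be split and then run a sliding-window/intermediate-value argument over consecutive $(j-1)$-blocks of $v_2,\dots,v_{2j-1}$ to locate a block with sum in $[C-v_1,C]$; this works, but it requires the auxiliary inequality $v_1+\cdots+v_j\ge C$ and the step-size bound. The paper instead observes that among the ordered leftovers $a_1\ge\cdots\ge a_{2j-1}$, the even-indexed terms $a_2,a_4,\dots,a_{2j-2}$ sum to at most $C$ (pair $a_{2i}\le a_{2i-1}$) and likewise the odd-indexed tail $a_3,a_5,\dots,a_{2j-1}$ sums to at most $C$ (pair $a_{2i+1}\le a_{2i}$); hence one simply puts the even-indexed colors in one clique, the odd-indexed tail in the other, and tops up both with $a_1$. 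This alternating-index trick gives the required split in two lines without any search, and you may want to replace your sliding-window argument with it.
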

\begin{proof}
The proof of this statement relies on a decomposition into two copies of $K_{g(j)}$. Let $a_1\ge\ldots\ge a_{2j-1}$ be the number of edges left in each color after $g(j)^2$ edges of color $e_1$ are used to color the edges between the two copies of $K_{g(j)}$. We have that
\[a_1+a_2+\ldots.+a_{2j-1}={2g(j) \choose 2} - g(j)^2\]
We have that $a_{2i-1}\ge a_{2i}$ so
\[a_2+\ldots.+a_{2j-2}\le a_1+a_3+\ldots+a_{2j-3}\implies a_2+\ldots+a_{2j-2}\le\frac{{2g(j) \choose 2} - g(j)^2}{2}\]
Additionally, we have that $a_{2i}\ge a_{2i+1}$ so
\[a_3+\ldots.+a_{2j-1}\le a_2+a_4+\ldots+a_{2j-2}\implies a_3+\ldots.++a_{2j-1}\le\frac{{2g(k) \choose 2} - g(j)^2}{2}\]
Thus, we can place all edges of color $a_2,a_4,\ldots,a_{2j-2}$ in one copy of $K_{g(j)}$ and all edges of colors $a_3,a_5,\ldots,a_{2j-1}$ in the other, with edges of color $a_1$ filling remaining edges in each graph. Because we will have at most $j$ colors in each copy of $K_{g(j)}$, we will be able to Gallai color each one.
\end{proof}

To prove $g(5)=10$, in the Lemma \ref{big} we take $j=3$, then $k=2j-1=5, n=2g(j)=10$ and $g(3)=5$. It follows that if $e_1 \geq g(3)^2=25$, then the $(10,5)$-sequence is always a G-sequence. Thus we may assume that $e_1\le 24$.

\begin{lemma}\label{terms}
Let $s=(e_1,e_2,e_3,e_4,e_5)$ be a $(10,5)$-sequence. If $s$ contains a term in $\{7,8,9,14,15,16,17,21,22,23,24\}$, then $s$ is a G-sequence.
\end{lemma}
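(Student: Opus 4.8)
The plan is to use cuts in the sense of Corollary \ref{bigbase} to peel $K_{10}$ down to a smaller complete graph that is already known to be G-colorable. Throughout, recall that $\sum_{i=1}^5 e_i=\binom{10}{2}=45$ with $e_1\ge\cdots\ge e_5$, so $e_1\ge 9$, and by the discussion following Lemma \ref{big} we may assume $e_1\le 24$. The two facts I will lean on are $g(4)=8$ --- so that every $(8,4)$- and $(9,4)$-sequence is automatically a G-sequence --- and Lemma \ref{spec}, whose content is that the only non-G $(7,4)$-sequence is $(9,4,4,4)$ (and which lists the four bad $(6,4)$-sequences). The eleven special values are exactly the sizes one meets in such peelings of $K_{10}$: the successive star sizes $9,8,7$ (removing one vertex from $K_{10},K_9,K_8$) and the single-cut sizes $9,16,21,24$ of $K_{10}$ (cutting off $K_1,K_2,K_3,K_4$), together with their immediate neighbours.

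First I would dispatch the values that reduce to a graph covered by $g(4)=8$. If some $e_i=9$, cut off a $K_1$ with color $i$ to reach a $(9,4)$-sequence; if $e_i=16$, cut $K_{10}=K_8\cup K_2$ with color $i$ and color the single edge of $K_2$ with any surviving color, leaving an $(8,4)$-sequence on $K_8$; if $e_i=17$, consume color $i$ in two stars of sizes $9$ and $8$ to reach an $(8,4)$-sequence; and if $e_i=8$, first remove a vertex with $e_1$ (legal since $e_1\ge 9$) and then remove a second vertex with color $i$, again reaching an $(8,\le 4)$-sequence. In each case the surviving graph has at least eight vertices and at most four colors, hence is G-colorable. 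The only arithmetic point to check is that the auxiliary $9$- and $8$-stars always have a large enough base color available; this follows exactly as in Lemma \ref{takeout} from $\sum e_i=45$ (for instance, if $e_1\le 16$ then $e_2\ge 8$, since otherwise $\sum e_i\le 16+4\cdot 7<45$).

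The remaining values $7,14,15,21,22,23,24$ I would reduce to a $(7,4)$-sequence, possibly sitting beside a trivially colored $K_2$ or $K_3$, and then invoke Lemma \ref{spec}. Concretely: for $e_i=21$ cut $K_{10}=K_7\cup K_3$ with color $i$; for $e_i\in\{22,23\}$ do the same $K_7\cup K_3$ cut and place the one or two leftover edges of color $i$ inside the $K_3$; for $e_i=24$ consume color $i$ in the three stars $9,8,7$ to reach $K_7$; for $e_i\in\{14,15\}$ first remove a vertex with $e_1$ and then cut $K_9=K_7\cup K_2$ with color $i$, sending any leftover edge of color $i$ into the $K_2$; and for $e_i=7$ reach $K_7$ by three stars of sizes $9,8,7$ whose final base color is $i$. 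Each route leaves a $(7,4)$-sequence on the surviving $K_7$, which by Lemma \ref{spec} is a G-sequence unless it equals $(9,4,4,4)$. In carrying this out one may also assume that none of the already-handled ``clean'' values $8,9,16,17$ occurs, which prunes the list of sequences that have to be examined; in particular, the presence of a $9$ immediately short-circuits to a $(9,4)$-sequence.

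The main obstacle is therefore dodging the single exception $(9,4,4,4)$ (and, on any $K_6\cup K_4$ variant, the four $(6,4)$-exceptions of Lemma \ref{spec}). For each value there is genuine freedom --- which colors serve as base colors for the stars, and how the few leftover edges are distributed into the small component --- and the content of the argument is that this freedom always lets one steer away from the bad sequence. The delicate instances are the sequences sitting right on top of an exception, such as $(24,9,4,4,4)$: the naive three-star reduction of the $24$ leaves exactly $(9,4,4,4)$ on $K_7$, but cutting instead $K_{10}=K_6\cup K_4$ with the $24$ and splitting $9,4,4,4$ as $(9,4,2)$ on $K_6$ (a $(6,3)$-sequence, hence G since $g(3)=5$) together with $(4,2)$ on $K_4$ succeeds. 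I expect the bulk of the work to be this finite but fiddly verification that every near-exceptional sequence admits at least one good decomposition; by comparison, checking that the required base colors exist (as in Lemma \ref{takeout}) is routine.
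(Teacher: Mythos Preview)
Your plan is correct and follows the paper's overall strategy: use one or two cuts to strip $K_{10}$ down to a $K_9$, $K_8$ or $K_7$ carrying at most four colors, then invoke $g(4)=8$ or Lemma~\ref{spec}. The paper, however, organizes the large values differently: for every $e_i\in\{14,15,16,17,21,22,23,24\}$ it uses the single decomposition $K_{10}=K_7\cup K_2\cup K_1$, spending the special value on the $K_7$--$K_2$ bipartite block (plus as many of the three remaining ``outside'' edges as the value allows) and a second color of size $\ge 10-r$ on the rest. Your choices are more varied --- $K_8\cup K_2$ for $16$, two consecutive stars for $17$, $K_7\cup K_3$ for $21,22,23$ --- and in fact buy you something: for $16$ and $17$ you land on $K_8$ and finish immediately by $g(4)=8$, whereas the paper still has to rule out the suspicious $(7,4)$-sequence $(9,4,4,4)$ in those cases.

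One genuine imprecision to fix: in your treatment of $e_i\in\{14,15\}$ you write ``first remove a vertex with $e_1$''. This fails verbatim when $i=1$, since after spending $9$ from $e_1$ you no longer have $14$ left for the $K_7\cup K_2$ cut. The repair is exactly what you use elsewhere: assuming no term equals $7,8,9$, the four terms other than $e_i$ sum to at least $30$, hence some $e_j$ with $j\ne i$ satisfies $e_j\ge 10$, and that color should serve as the first star. With this correction your route and the paper's are essentially the same reduction to $K_7$ followed by the (acknowledged) finite check against $(9,4,4,4)$; the paper carries that check out explicitly case by case, and your example $(24,9,4,4,4)$ --- which the paper simply short-circuits via the $9$ --- shows you have the right idea for the residual verification.
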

\begin{proof}
Let $s=(e_1,e_2,e_3,e_4,e_5)$ be a $(10,5)$-sequence.
\begin{itemize}
\item If $s$ contains 9, then we remove a $K_{1,9}$. What remains is a $(9,4)$-sequence. Since $g(4)=8$, we conclude that $s$ is a G-sequence.
\item If $s$ contains 8, then $s$ must contain a term $\geq 9$. Firstly we remove a $K_{1,9}$ from that term and then remove a $K_{1,8}$ from the term 8 in $s$. What remains is a $(8,4)$-sequence which is always a G-sequence since $g(4)=8$.
\item If $s$ contains 7, then $s$ contains either one term $\geq 17$ or one term $\geq 9$ and another $\geq 8$, and thus we could always remove a $K_{1,9}$ and subsequently a $K_{1,8}$. Afterwards we remove a $K_{1,7}$ from the term 7. What remains is a $(7,4)$-sequence. By Lemma \ref{spec}, we're done if we can prove that those $(10,5)$-sequences which become sequences in Lemma \ref{spec} after such coloring, called suspicious sequences in the rest of our proof, are all G-sequences on $K_{10}$. Suspicious sequences are:

$(26,7,4,4,4) \rightarrow$ $e_1=26>25$, it's a G-sequence by Lemma \ref{big}.

$(21,9,7,4,4) \rightarrow$ it's a G-sequence because it contains 9.

$(18,12,7,4,4) \rightarrow (18,12,7,4,4) - (8,9,7,0,0) =(10,4,4,3)$ on $K_7$

$(17,13,7,4,4)\rightarrow (17,13,7,4,4)-(9,8,7,0,0)=(8,5,4,4)$ on $K_7$.
\item If $s$ contains a term in $\{14,15,16,17,21,22,23,24\}$, then we use the decomposition $K_{10}\rightarrow K_7 \cup K_2 \cup K_1$
\item For 14, we use it up to color the edges between $K_7$ and $K_2$. We may assume there is no 9,8, or 7 in $s$ as otherwise it will be a G-sequence as aforementioned. Thus, other than 14, $s$ contains a term $\geq 10$. Then we use $1+2+7=10$ from that term to color the edge within $K_2$, edges between $K_1$ and $K_2$, and those connecting $K_1$ and $K_7$. Suspicious sequences are:

$(19,14,4,4,4)\rightarrow(19,14,4,4,4)-(9,14,1,0,0)=(10,4,4,3)$ on $K_7$

$(14,14,9,4,4)\rightarrow$ G-sequence because it contains a 9.
\item For 15, we use it up to color the edges between $K_7$ and $K_2$ and the one within $K_2$. We may assume there's no 9,8, or 7 in $s$, and thus we can use $9$ from another term in $s$ for a $K_{1,9}$. Suspicious sequences are:

$(18,15,4,4,4)\rightarrow$ $K_6 \cup K_3 \cup K_1\rightarrow$ Use 18 up for edges between $K_6$ and $K_3$, 9 from 15 for a $K_{1,9}$, and 3 from 4 for edges within $K_3\rightarrow(6,4,4,1)$ on $K_6$.

$(15,14,9,4,4)\rightarrow$ G-sequence because it contains 9.

\item For 16, we use it up to color the edges between $K_7$ and $K_2$, and the edges connecting $K_2$ and $K_1$. We may assume there's no 9,8, or 7 in $s$, and thus we can use $1+7=8$ from another term on the edge within $K_2$ and those between $K_7$ and $K_1$. Suspicious sequences are:

$(17,16,4,4,4)\rightarrow K_4\cup K_4\cup K_2\rightarrow$ Use 16 up to color all edges between $K_4$ and $K_4$, and 16 out of 17 to color edges between $K_4$ and $K_2$ and one on the edge within $K_2 \rightarrow (4,4,4)$ on $K_4\cup K_4$.

$(16,12,9,4,4)\rightarrow$ G-sequence because it contains 9.
\item For 17, we use it up on the edges between $K_7$ and $K_2$, the edges connecting $K_2$ and $K_1$, and the one within $K_2$. Then we use 7 from another term on edges between $K_7$ and $K_1$. Suspicious sequences are:

$(17,16,4,4,4) \rightarrow$ G-sequence as checked in the case with 16.

$(17,11,9,4,4)\rightarrow$ G-sequence as it contains 9.
\item For 21, we use it up on the edges between $K_7$ and $K_2$ and the edges between $K_7$ and $K_1$. We use 3 from another term on the edges connecting $K_2$ and $K_1$, and the one within $K_2$. Suspicious sequences are:

$(21,12,4,4,4)\rightarrow (21,12,4,4,4)-(21,2,1,0,0)=(10,4,4,3)$ on $K_7$

$(21,9,7,4,4)\rightarrow$ G-sequence since it contains 9.
\item For 22, we use it up to color the edges between $K_7$ and $K_2$, the edges between $K_7$ and $K_1$, and the one within $K_2$. We use 2 from another term on the edges connecting $K_2$ and $K_1$. Suspicious sequences are:

$(22,11,4,4,4)\rightarrow(22,11,4,4,4)-(22,0,2,0,0)=(11,4,4,2)$ on $K_7$.

$(22,9,6,4,4)\rightarrow$ G-sequence on $K_{10}$ as it contains 9.
\item For 23, we use it up on the edges between $K_7$ and $K_2$, the edges between $K_7$ and $K_1$, and those connecting $K_2$ and $K_1$. We use 1 from another term on the one within $K_2$. Suspicious sequences are $(23,10,4,4,4)$ and $(23,9,5,4,4)$:

$(23,10,4,4,4)\rightarrow(23,10,4,4,4)-(23,0,1,0,0)=(10,4,4,3)$ on $K_7$.

$(23,9,5,4,4)\rightarrow$ G-sequence on $K_{10}$ as it contains 9.
\item For 24, we use it up on all edges other than those within $K_7$ in our decomposition. The only suspicious sequence is $(24,9,4,4,4)$. We know that it must be a G-sequence as it contains 9.\qedhere
\end{itemize}
\end{proof}

\subsection{Proof of Theorem \ref{g5}}
\begin{proof}
First, $(12,6,6,6,6)$ is not a G-sequence on $K_9$. The only decomposition we can do with sequence is $K_9 \rightarrow K_8 \cup K_1$. What remains afterwards is $(6,6,6,6,4)$ on $K_8$, which violates Corollary \ref{bigbase}. Thus $g(5)>9$.

We now proceed to show that $g(5) = 10$. Excluding values 9,8,7, we divide all of $(10,5)$-sequences into 4 categories.

\textbf{Case 1.} $e_1 \geq 10 > 6\geq e_2$. Then $e_1 \geq 21$ in this case as $e_2+e_3+e_4+e_5 \leq 24$. By Lemmas \ref{big} and \ref{terms}, we're done in this case.

\textbf{Case 2.} $e_1 \geq e_2\geq 10 > 6\geq e_3$. Then we have $e_1 \geq 14$ as $e_3+e_4+e_5 \leq 18$. By Lemma \ref{big} and \ref{terms}, we only have to deal with cases where $e_1=18,19,20$. We use the following the decomposition: $K_{10} \rightarrow K_6\cup K_3 \cup K_1$.
\begin{itemize}
\item When $e_1=18$, we use it up on the edges between $K_6$ and $K_3$. Since $e_2 \geq 10$, we use 9 for edges between $K_1$ and $K_3$, $K_6$ respectively, and 1 for an edge within $K_3$. Then use 2 from a term amongst $e_3,e_4,e_5$ on the remaining two edges within $K_3$. What remains is a $(6,4)$-sequence. Similar to the idea previously, all suspicious sequences in this subcase are:

$(18,13,6,4,4) \rightarrow$ Use $e_1$ as above, $e_2$ for a $K_{1,9}$, and $e_4$ for edges in $K_3\rightarrow (18,13,6,4,4)-(18,9,0,3,0)=(6,4,4,1)$ on $K_6$.

$(18,13,6,5,3)\rightarrow$ Use $e_1$ as above, $e_2$ for a $K_{1,9}$, and $e_4$ for edges in $K_3\rightarrow(18,13,6,5,3)-(18,12,0,0,0)=(6,5,3,1)$ on $K_6$.

$(18,17,5,3,2),(18,17,4,3,3)$,$(18,17,4,4,2),(18,17,6,2,2)$,$(18,14,9,2,2)$,

$(18,14,7,4,2),(18,14,5,4,4),(18,14,6,4,3),(18,13,9,3,2),(18,13,7,5,2)$,

$(18,13,7,4,3),(18,12,9,3,3)(18,12,7,5,3),(18,12,9,4,2),(18,12,7,6,2)$,

$(18,12,7,4,4),(18,13,8,3,3),(18,16,5,3,3)\rightarrow$ All these are G-sequences on $K_{10}$ by Lemma \ref{terms}.
\item When $e_1=19$, we use 18 on the edges between $K_6$ and $K_3$, and the remaining 1 on an edge within $K_3$. We use 9 from $e_2$ for edges between $K_1$ and $K_3$, $K_6$ respectively. Then use 2 from a term amongst $e_3,e_4,e_5$ on the remaining two edges within $K_3$. Suspicious sequences in this subcase are:

$(19,13,6,4,3) \rightarrow$ Use $e_1$ up as above, and $e_2$ for a $K_{1,9}$ and 2 within $K_3\rightarrow (19,13,6,4,3)-(19,11,0,0,0)=(6,4,3,2)$ on $K_6$.

$(19,13,5,4,4)\rightarrow$ Use $e_1$ up as above, and $e_2$ for a $K_{1,9}$ and 2 within $K_3\rightarrow (19,13,5,4,4)-(19,11,0,0,0)=(5,4,4,2)$ on $K_6$.

$(19,12,6,4,4)\rightarrow$ Use $e_1$ up as above, and $e_2$ for a $K_{1,9}$ and 2 within $K_3\rightarrow (19,12,6,4,4)-(19,11,0,0,0)=(6,4,4,1)$ on $K_6$.

$(19,12,6,5,3)\rightarrow$ Use $e_1$ up as above, and $e_2$ for a $K_{1,9}$ and 2 within $K_3\rightarrow (19,12,6,5,3)-(19,11,0,0,0)=(6,5,3,1)$ on $K_6$.

$(19,16,5,3,2),(19,16,4,3,3),(19,16,4,4,2),(19,16,6,2,2),(19,13,9,2,2)$,

$(19,13,7,4,2),(19,12,9,3,2),(19,12,7,5,2),(19,12,7,4,3),(19,11,9,4,2)$,

$(19,11,7,6,2),(19,11,7,4,4),(19,11,9,3,3),(19,11,7,5,3),(19,12,8,3,3)$,

$(19,15,5,3,3)\rightarrow$ G-sequences on $K_{10}$ by Lemma \ref{terms}.
\item When $e_1=20$, we use 18 on the edges between $K_6$ and $K_3$, and the remaining 2 on two edges within $K_3$. We use 10 from $e_2$ for edges between $K_1$ and $K_3$, $K_6$ respectively, and the remaining one edge within $K_3$. Suspicious sequences in this subcase are:

$(20,13,4,4,4)\rightarrow$ Do the decomposition $K_{10} \rightarrow K_5 \cup K_4 \cup K_1 \rightarrow $ Use 20 up on the edges between $K_5$ and $K_4$, and 9 from 13 on the edges between $K_1$ and $K_5$, $K_4$ respectively $\rightarrow (4,4,4,4)$ on $K_5 \cup K_4$.

$(20,13,6,3,3)\rightarrow$ Do the decomposition $K_{10} \rightarrow K_5 \cup K_4 \cup K_1 \rightarrow $ Use 20 up on the edges between $K_5$ and $K_4$, and 9 from 13 on the edges between $K_1$ and $K_5$, $K_4$ respectively $\rightarrow (6,4,3,3)$ on $K_5 \cup K_4$.

$(20,17,3,3,2),(20,17,4,2,2),(20,14,4,4,3),(20,14,7,2,2),(20,13,7,3,2)$,

$(20,12,7,3,3),(20,12,7,4,2),(20,16,3,3,3)\rightarrow$ All are G-sequences on $K_{10}$ by Lemma \ref{terms}.
\end{itemize}

\textbf{Case 3.} $e_1 \geq e_2 \geq e_3 \geq10 > 6\geq e_4$. Then $e_1 \geq 11$ as $e_4+e_5 \leq 12$. By Lemma \ref{terms} and Case 2, we only need to tackle cases where $e_1=11,12,13$.
\begin{itemize}
\item When $e_1=11$, there's only one sequence to check:

$(11,11,11,6,6)$ can be reduced to the $(6,4)$-sequence $(6,4,3,2)$ by removing $K_{1,9},K_{1,8},K_{1,7},K_{1,6}$. Then by Lemma \ref{spec} it's a G-sequence.
\item When $e_1=12$ or 13, we remove a $K_{9}$ from $e_2$ and a $K_{8}$ from $e_3$. What remains is an $(8,5)$-sequence with $e_1=12$ or 13. Then we do the decomposition $K_8 \rightarrow K_6 \cup K_2$.
\item For $e_1=12$, we use it up on edges between $K_6$ and $K_2$, and then use 1 from $e_3$ on the edge within $K_2$. After such coloring, we derive a $(6,4)$-sequence. Similarly, we check all the suspicious sequences (here $12=e_1 \geq e_2\geq e_3$):

$(12,12,12,6,3)\rightarrow$ Use $e_1$ as above, and then use 1 from $e_2$ on the edge within $K_2\rightarrow(12,12,12,6,3)-(12,9+1,8,0,0)=(6,4,3,2)$ on $K_6$.

$(12,12,12,7,2),(12,12,11,7,3),(12,11,11,7,4) \rightarrow$ G-sequences by Lemma \ref{terms}.
\item For $e_1=13$, we use 12 on edges between $K_6$ and $K_2$, and then 1 on the edge within $K_2$. What remains is a $(6,4)$-sequence. Suspicious sequences are (here $13=e_1 \geq e_2\geq e_3$):

$(13,13,12,4,3),(13,13,11,4,4)$. Removing $K_{1,9},K_{1,8},K_{1,7},K_{1,6},K_{1,5}$ following the order $e_3,e_1,e_2,e_2,e_1$ respectively, we get a $(5,3)$-sequence which is always a G-sequence as $g(3)=5$.

$(13,12,12,4,4)\rightarrow (13,12,12,4,4)-(8,9,0,0,0)=(12,5,4,4,3)$ on $K_8\rightarrow$ Then do $K_8 \rightarrow K_6 \cup K_2 \rightarrow$ Use 12 up on the edges between $K_6$ and $K_2$, and 1 from 3 on the edge within the $K_2$ $\rightarrow (5,4,4,2)$ on $K_6$.

$(13,12,11,6,3)\rightarrow(13,12,11,6,3)-(8,9,7,6,0)=(5,4,3,3)$ on $K_6$.

$(13,12,11,7,2),(13,12,10,7,3),(13,11,11,7,3),(13,13,10,7,2)$,\\
$(13,12,11,7,2),(13,11,10,7,4)\rightarrow$ All are G-sequences on $K_{10}$ by Lemma \ref{terms}.
\end{itemize}

\textbf{Case 4.} $e_1 \geq e_2 \geq e_3 \geq e_4\geq10 > 6\geq e_5$.

By Lemma \ref{terms} and Case 3, we only need to examine cases where $e_1=10,11$. In this sense, there only remain five more sequences to check:\newline
$(11,11,11,11,1),(11,11,11,10,2),(11,11,10,10,3),(11,10,10,10,4),(10,10,10,10,5)$. Removing $K_{1,9},K_{1,8},K_{1,7},K_{1,6}$ following the order $e_4,e_3,e_2,e_1$ respectively, we get $(5,4,3,2,1)$ which is obviously a G-sequence on $K_6$.
\end{proof}

\medskip
\noindent{\bf Acknowledgment.} This research was done in the framework of a Budapest Semesters in Mathematics REU program, Summer, 2019. Our advisor was Andr\'as Gy\'arf\'as.


\begin{thebibliography}{99}

\bibitem{Betal} J. de Oliveira Bastos, F. S. Benevides, G. O. Mota, I. Sau,  Counting Gallai 3-colorings of complete graphs, {\em arXiv:1805.06805}.
\bibitem{BBH} J. de Oliveira Bastos, F. S. Benevides, J. Han, The number of Gallai $k$-colorings of complete graphs, {\em arXiv:1812.10465v2}.
\bibitem{BL} J. Balogh, L. Li, The typical structure of Gallai colorings and their extremal graphs, {\em arXiv:1812.07747}.
\bibitem{GAL} T. Gallai, Transitiv orientierbare Graphen, {\em Acta Math. Acad. Sci. Hung.} {\bf 18} (1967) 25-66. English translation by F. Maffray and M, Preissmann, In: J. L. Ramirez-Alfonsin and B. A. Reed, (editors), Perfect Graphs, John Wiley and Sons, 2001, pp. 25-66.
\bibitem{GyPPW} A. Gy\'arf\'as, D. P\'alv\"olgyi, B. Patk\'os, M. Wales, Distribution of colors in Gallai colorings, {\em arXiv:1903.04380v4}.
\bibitem{GYS} A. Gy\'arf\'as, G. Simonyi, Edge colorings of complete graphs without tricolored triangles, {\em Journal of Graph Theory} {\bf 46} (2004) 211-216.
\bibitem{GYSSS} A. Gy\'arf\'as, G. N. S\'ark\"ozy, A. Seb\H o, S. Selkow, Ramsey-type results for Gallai colorings, {\em Journal of Graph Theory} {\bf 64} (2010) 233-243.
\bibitem{LT} I. Leader, T. S. Tan, Connected colourings of complete graphs and hypergraphs, {\em Graphs and Combinatorics} {\bf 32} (2016) 257-269.
\bibitem{MS} C. Magnant, I. Schiermeyer, Gallai-Ramsey number for $K_5$, {\em arXiv: 1901.03622}.

\end{thebibliography}
\end{document}